\newtheorem{theorem}{Theorem}
\newtheorem{lemma}[theorem]{Lemma}
\theoremstyle{definition}
\newtheorem{definition}{Definition}
\newtheorem{example}{Example}
\newtheorem{corollary}{Corollary}
\theoremstyle{remark}
\newtheorem{remark}{Remark}
\numberwithin{equation}{section}
\def\JS{\text{JS}}
\def\js{\text{js}}
\def\LS{\text{LS}}
\def\ls{\text{ls}}
\def\D{\mathcal{D}}
\def\Orb{\textrm{Orb}}
\def\rec{\text{rec}}
\def\Sing{\text{Sing}}
\def\N{\mathbb{N}}
\definecolor{gris25}{gray}{0.2}
\def\pr{\textrm{pr}}
\begin{document}

\title[Combinatorial interpretations  of the Jacobi-Stirling numbers]{Combinatorial interpretations  of the Jacobi-Stirling numbers}
\author{Yoann Gelineau}%    Address of record for the research reported here
\address{Universit\'{e} de Lyon \\ Universit\'{e} Lyon 1 \\ Institut Camille Jordan \\ UMR 5208 du CNRS \\
43, boulevard du 11 novembre 1918 \\ F-69622 Villeurbanne Cedex, France}
\email{gelineau@math.univ-lyon1.fr}
\urladdr{http://math.univ-lyon1.fr/~gelineau/}

\author{Jiang Zeng}
\address{Universit\'{e} de Lyon \\ Universit\'{e} Lyon 1 \\ Institut Camille Jordan \\ UMR 5208 du CNRS \\
43, boulevard du 11 novembre 1918 \\ F-69622 Villeurbanne Cedex, France}
\email{zeng@math.univ-lyon1.fr}
\urladdr{http://math.univ-lyon1.fr/~zeng/}

\subjclass[2000]{Primary 05A05, 05A15, 33C45; Secondary 05A10, 05A18, 34B24}

\keywords{Jacobi-Stirling numbers, Legendre-Stirling numbers, Stirling numbers, central factorial numbers, signed partitions, quasi-permutations, simply hooked quasi-permutations, Riordan complexes}

\begin{abstract}
The Jacobi-Stirling numbers of the first and second kinds were introduced in 2006 in
the spectral theory and are polynomial refinements of
the Legendre-Stirling numbers.
Andrews and Littlejohn have recently
given a combinatorial interpretation for the second kind of the latter numbers.
Noticing  that these numbers are very similar to the classical central factorial numbers,
we give combinatorial interpretations for the Jacobi-Stirling numbers of both kinds,
which provide a unified treatment of
the combinatorial theories for  the two previous sequences and also for
the Stirling numbers of both kinds.
\end{abstract}

\maketitle

%\tableofcontents

\section{Introduction}
\label{s:Intro}
It is well known that Jacobi polynomials $P_n^{(\alpha,\beta)}(t)$ satisfy the classical second-order Jacobi differential equation:
\begin{equation} \label{Jacobi} (1-t^2)y''(t)+(\beta-\alpha-(\alpha+\beta+2)t)y'(t)+n(n+\alpha+\beta+1)y(t)=0.
\end{equation}
Let $\ell_{\alpha,\beta}[y](t)$ be the Jacobi differential operator:
$$\ell_{\alpha,\beta}[y](t)=\frac{1}{(1-t)^\alpha(1+t)^\beta}\left( -(1-t)^{\alpha+1}(1+t)^{\beta+1} y'(t)\right)' .$$
Then, equation \eqref{Jacobi} is equivalent to say that $y=P_{n}^{(\alpha,\beta)}(t)$ is a solution of
$$\ell_{\alpha,\beta}[y](t)=n(n+\alpha+\beta+1)y(t).$$
In \cite[Theorem 4.2]{Everitt2}, for each $n \in \N$, Everitt et al. gave  the following expansion of the $n$-th composite power of $\ell_{\alpha,\beta}$:
$$(1-t)^\alpha(1+t)^\beta \ell_{\alpha,\beta}^n[y](t)=\sum\limits_{k=0}^{n} (-1)^k \left( P^{(\alpha,\beta)}S_{n}^{k}(1-t)^{\alpha+k}(1+t)^{\beta+k} y^{(k)}(t)\right)^{(k)},$$
where $P^{(\alpha,\beta)}S_{n}^{k}$ are called the Jacobi-Stirling numbers of the second kind.
They~\cite[(4.4)]{Everitt2} also gave an explicit summation formula for $P^{(\alpha,\beta)}S_{n}^{k}$ numbers, showing that these numbers depend only on one parameter $z=\alpha+\beta+1$. So we can define  the Jacobi-Stirling numbers as the connection coefficients in the following equation:
\begin{align}\label{eq:defJS}
x^n=\sum_{k=0}^n {\JS}_n^k(z)\prod_{i=0}^{k-1}(x-i(z+i)),
\end{align}
where $\JS_{n}^{k}(z)=P^{(\alpha,\beta)}S_{n}^{k}$, while the Jacobi-Stirling numbers of the first kind
 can be defined by inversing the above equation:
\begin{align}\label{eq:defjs}
\prod_{i=0}^{n-1}(x-i(z+i))=\sum_{k=0}^n {\js}_n^k(z)x^k,
\end{align}
where $\js_n^k(z)=P^{(\alpha,\beta)}s_n^k$ in the notations of \cite{Everitt2}.

It follows from \eqref{eq:defJS} and \eqref{eq:defjs} that the Jacobi-Stirling numbers  $\JS_n^k( z)$ and  $\js_n^k( z)$
 satisfy, respectively,
  the following recurrence relations:
\begin{equation} \label{eqlnk1}
\left\{ \begin{array}{l} \JS_0^0(z)=1, \qquad \JS_n^k(z)=0, \quad  \text{if} \ k \not\in\{1,\ldots,n\}, \\
\JS_n^k(z)= \JS_{n-1}^{k-1}(z)+k(k+z)\,\JS_{n-1}^{k}(z),  \quad   n,k \geq 1. \end{array} \right.
 \end{equation}
 and
\begin{equation} \label{eqlnk2}
\left\{ \begin{array}{l} \js_0^0(z)=1, \qquad \js_n^k(z)=0, \quad  \text{if} \ k \not\in\{1,\ldots,n\}, \\
\js_n^k(z)= \js_{n-1}^{k-1}(z)-(n-1)(n-1+z)\,\js_{n-1}^{k}(z),  \quad   n,k \geq 1. \end{array} \right.
 \end{equation}
 The first values of $\JS_{n}^{k}(z)$ and
$\js_{n}^{k}(z)$ are given, respectively,  in  Tables \ref{Lnkz} and \ref{jsnkz}.
\begin{table}
\caption{The first values of $\JS_n^k(z)$}
\label{Lnkz}
\begin{tiny} \[ \begin{tabular}{c|cccccc}
$k\backslash n$ & $1$ & $2$ & $3$ & $4$ & $5$  & $6$
\\
\hline
$1$ & $1$&$z+1$&$(z+1)^2$& $(z+1)^3$  &$(z+1)^4$&$(z+1)^5$ \\
$2$ && $1$& $5+3z$&$21+24z+7z^2$  &$85+141z+79z^2+15z^3$&$341+738z+604z^2+222z^3+31z^4$   \\
$3$ & &&$1$ &$14+6z$&$147+120z+25z^2$ &$1408+1662z+664z^2+90z^3$   \\
$4$ & &&&$1$ &  $30+10z$ &$627+400z+65z^2$                \\
$5$ & &&&&$1$  &$55+15z$        \\
$6$ & & &&&&$1$ \\
 \end{tabular} \] \end{tiny} \end{table}

As remarked in \cite{Everitt1,Everitt2,AL09}, the previous definitions are reminiscent to the well-known
 \emph{Stirling numbers}
 of the second (resp. the first) kind $S(n,k)$ (resp. $s(n,k)$),
which are defined (see \cite{Comtet74}) by
$$
x^n=\sum_{k=0}^nS(n,k) \prod_{i=0}^{k-1}(x-i),\qquad
\prod_{i=0}^{n-1}(x-i)=\sum_{k=0}^ns(n,k)x^k.
$$
and satisfy the following recurrences:
\begin{align}
S(n,k) &= S(n-1,k-1)+k S(n-1,k), \quad &n,k \geq 1,\label{eqSt2}\\
s(n,k) &= s(n-1,k-1)-(n-1) s(n-1,k), \quad &n,k \geq 1.\label{eqSt1}
\end{align}

The starting point of this paper is the observation  that the
\emph{central factorial numbers} of the second (resp. the first)
kind $T(n,k)$ (resp. $t(n,k)$) seem to be more appropriate for comparaison.
Indeed, these numbers
  are defined in Riordan's book \cite[p. 213-217]{Riordan} by
\begin{align}
x^n=\sum_{k=0}^nT(n,k)\,x\prod_{i=1}^{k-1}\left(x+\frac{k}{2}-i\right),
\end{align}
and
\begin{align} \label{eqtnk}
x\prod_{i=1}^{n-1}\left(x+\frac{n}{2}-i\right)=\sum_{k=0}^nt(n,k)x^k.
\end{align}
Therefore, if we denote the central factorial numbers of even indices by $U(n,k)=T(2n,2k)$ and $u(n,k)=t(2n,2k)$, then~:

\begin{align}
U(n,k)&=U(n-1,k-1)+k^2U(n-1,k),\label{eqUnk}  \\
 u(n,k)&=u(n-1,k-1)-(n-1)^2u(n-1,k). \label{equnk}
\end{align}

\begin{table}
\caption{The first values of $js_n^k(z)$}
\label{jsnkz}
\begin{tiny} \[ \begin{tabular}{c|ccccc}
$k\backslash n$ & $1$ & $2$ & $3$ & $4$ & $5$
\\
\hline
$1$ & $1$       & $-z-1$   & $2z^2+6z+4$    & $-6z^3-36z^2-66z-36$ & $24z^4+240z^3+840z^2+1200z+576$
\\ $2$ &  & $1$ & $-3z-5$  & $11z^2+48z+49$ & $-50z^3-404z^2-1030z-820$
\\ $3$ & & &$1$ & $-6z-14$ & $35z^2+200z+273$
 \\$4$ & &&&$1$ & $-10z-30$
 \\$5$ & &&&&$1$
\\
 \end{tabular} \] \end{tiny} \end{table}

From  \eqref{eqlnk1}-\eqref{equnk}, we easily derive  the following result.

\begin{theorem} \label{thm1} Let $n,k$ be positive integers with $n \geq k$.
The Jacobi-Stirling numbers $\JS_n^k(z)$ and $(-1)^{n-k}\js_{n}^{k}(z)$
are  polynomials in $z$ of degree $n-k$ with positive integer coefficients.
Moreover, if
\begin{align}\label{eq:defa}
\JS_{n}^{k}(z)&=a_{n,k}^{(0)}+a_{n,k}^{(1)}z+\cdots +a_{n,k}^{(n-k)}z^{n-k},\\
(-1)^{n-k}\js_{n}^{k}(z)&=b_{n,k}^{(0)}+b_{n,k}^{(1)}z+\cdots +b_{n,k}^{(n-k)}z^{n-k},\label{eq:defb}
\end{align}
then
$$%\begin{array}{llll}
 a_{n,k}^{(n-k)}=S(n,k),\quad a_{n,k}^{(0)}=U(n,k), \quad
b_{n,k}^{(n-k)}=|s(n,k)|,  \quad b_{n,k}^{(0)}=|u(n,k)|.
%\LS(n,k)&=\displaystyle \sum_{i=0}^{n-k}a_{n,k}^{(i)},\quad
%&|\ls(n,k)|&=\displaystyle \sum_{i=0}^{n-k}b_{n,k}^{(i)}.
%\end{array}
$$
\end{theorem}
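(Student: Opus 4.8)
The plan is to establish all the assertions simultaneously by a single induction on $n$, in each case reading the conclusion directly off the recurrences \eqref{eqlnk1} and \eqref{eqlnk2}. The boundary cases are immediate: $\JS_n^n(z)=\js_n^n(z)=1$ and $\JS_n^1(z)=(1+z)^{n-1}$, while $S(n,n)=U(n,n)=|s(n,n)|=|u(n,n)|=1$ and $S(n,1)=U(n,1)=|s(n,1)|=|u(n,1)|=1$; so it remains to run the induction for $2\le k\le n-1$, where every term occurring on the right of \eqref{eqlnk1} and \eqref{eqlnk2} is already covered by the induction hypothesis.

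For the second kind, write $k(k+z)=k^2+kz$, which has positive integer coefficients. If, inductively, $\JS_{n-1}^{k-1}(z)$ is a polynomial of degree $n-k$ and $\JS_{n-1}^{k}(z)$ one of degree $n-k-1$, both with positive integer coefficients, then \eqref{eqlnk1} presents $\JS_n^k(z)=\JS_{n-1}^{k-1}(z)+(k^2+kz)\JS_{n-1}^k(z)$ as a polynomial of degree at most $n-k$ with nonnegative integer coefficients, and in fact with positive integer coefficients, since for each $j$ with $0\le j\le n-k$ the coefficient of $z^j$ already receives the positive contribution $a_{n-1,k-1}^{(j)}$ from the first summand. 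Equating coefficients of $z^{n-k}$ gives $a_{n,k}^{(n-k)}=a_{n-1,k-1}^{(n-k)}+k\,a_{n-1,k}^{(n-k-1)}$, which by the induction hypothesis and \eqref{eqSt2} equals $S(n-1,k-1)+kS(n-1,k)=S(n,k)$; since $S(n,k)>0$, the degree is exactly $n-k$. Equating constant terms gives $a_{n,k}^{(0)}=a_{n-1,k-1}^{(0)}+k^2a_{n-1,k}^{(0)}=U(n-1,k-1)+k^2U(n-1,k)=U(n,k)$ by \eqref{eqUnk}.

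For the first kind, multiplying \eqref{eqlnk2} by $(-1)^{n-k}$ and using $(-1)^{n-k}=(-1)^{(n-1)-(k-1)}=-(-1)^{(n-1)-k}$ rewrites it as
\begin{align*}
(-1)^{n-k}\js_n^k(z)=(-1)^{(n-1)-(k-1)}\js_{n-1}^{k-1}(z)+(n-1)(n-1+z)\,(-1)^{(n-1)-k}\js_{n-1}^k(z),
\end{align*}
whose coefficients are now built by additions only, since $(n-1)(n-1+z)=(n-1)^2+(n-1)z$ has positive integer coefficients. The same induction as above shows $(-1)^{n-k}\js_n^k(z)$ is a polynomial of degree $\le n-k$ with positive integer coefficients, with $b_{n,k}^{(n-k)}=b_{n-1,k-1}^{(n-k)}+(n-1)\,b_{n-1,k}^{(n-k-1)}$ and $b_{n,k}^{(0)}=b_{n-1,k-1}^{(0)}+(n-1)^2b_{n-1,k}^{(0)}$. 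To identify these with $|s(n,k)|$ and $|u(n,k)|$ I would use that $s(n,k)$ and $u(n,k)$ both have sign $(-1)^{n-k}$ (a one-line induction from \eqref{eqSt1} and \eqref{equnk}), so that multiplying \eqref{eqSt1} and \eqref{equnk} by $(-1)^{n-k}$ yields $|s(n,k)|=|s(n-1,k-1)|+(n-1)|s(n-1,k)|$ and $|u(n,k)|=|u(n-1,k-1)|+(n-1)^2|u(n-1,k)|$; these coincide with the recurrences just obtained, whence $b_{n,k}^{(n-k)}=|s(n,k)|>0$ (confirming the degree) and $b_{n,k}^{(0)}=|u(n,k)|$.

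The computations are otherwise mechanical; the only step demanding care is the sign bookkeeping in the first-kind case — checking that after replacing $\js_n^k$ by $(-1)^{n-k}\js_n^k$ the three terms of the recurrence all acquire a $+$ sign, and that the companion recurrences for $|s(n,k)|$ and $|u(n,k)|$ acquire $+$ signs for the same reason, so that the inductive comparison of coefficients is legitimate.
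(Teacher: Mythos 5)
Your overall strategy --- a simultaneous induction on $n$ reading everything off the recurrences \eqref{eqlnk1}--\eqref{equnk}, matching leading coefficients against \eqref{eqSt2}/\eqref{eqSt1} and constant terms against \eqref{eqUnk}/\eqref{equnk} --- is exactly what the paper intends (it offers no more detail than ``From \eqref{eqlnk1}--\eqref{equnk}, we easily derive the following result''), and your inductive step for $2\le k\le n-1$, including the sign bookkeeping for the first kind, is correct.

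There is, however, a concrete error in your boundary cases: $|s(n,1)|$ and $|u(n,1)|$ are \emph{not} equal to $1$. From \eqref{eqSt1} one gets $|s(n,1)|=(n-1)\,|s(n-1,1)|=(n-1)!$, and from \eqref{equnk} one gets $|u(n,1)|=(n-1)^2\,|u(n-1,1)|=((n-1)!)^2$; compare Table~\ref{jsnkz}, where $\js_3^1(z)=2z^2+6z+4$ has leading coefficient $2=|s(3,1)|$ and constant term $4=|u(3,1)|$. Moreover you never compute $\js_n^1(z)$, so the $k=1$ case of the first-kind assertions is not actually verified, and since the step from $k=1$ to $k=2$ feeds on these values, the error would propagate. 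The repair is easy: either compute $(-1)^{n-1}\js_n^1(z)=\prod_{i=1}^{n-1}i(z+i)$ directly from \eqref{eqlnk2} and read off the leading coefficient $(n-1)!$ and the constant term $((n-1)!)^2$, or --- cleaner --- drop $k=1$ as a separate base case and let the induction cover all $1\le k\le n$, observing that for $k=1$ the terms indexed by $k-1=0$ vanish simultaneously on both sides of the comparison (in \eqref{eqlnk1}, \eqref{eqlnk2} and in \eqref{eqSt2}--\eqref{equnk}), and that positivity of all $n-k+1$ coefficients then follows from the product $(k^2+kz)\cdot\JS_{n-1}^{k}(z)$ (resp. $\bigl((n-1)^2+(n-1)z\bigr)\cdot(-1)^{n-1-k}\js_{n-1}^{k}(z)$) rather than from the vanished first summand. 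With that correction the proof is complete.
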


Note that when $z=1$, the Jacobi-Stirling numbers reduce to
the \emph{Legendre-Stirling numbers} of
the first and the second kinds \cite{Everitt1}:
\begin{align}\label{defLS}
\LS(n,k)=\JS_n^k(1),\quad \ls(n,k)=\js_n^k(1).
\end{align}

The integral nature of the involved coefficients in the above polynomials ask for combinatorial interpretations.
Indeed, it is folklore (see \cite{Comtet74}) that the Stirling number $S(n,k)$ (resp. $|s(n,k)|$) counts the number of
 partitions (resp. permutations) of $[n]:=\{1,\ldots,n\}$ into  $k$ blocks (resp. cycles). In 1974, in his study of Genocchi numbers,
Dumont \cite{Dumont} discovered
 the first combinatorial interpretation for the central factorial number $U(n,k)$ in terms of
ordered pairs of supdiagonal quasi-permutations of $[n]$ (cf. \S~2).
Recently, Andrews and Littlejohn~\cite{AL09}
interpreted $\JS_n^k(1)$ in terms of  set partitions (cf. \S~2).

Several questions arise naturally in the light of the above  known results:
\begin{itemize}
\item First of all, what is the combinatorial refinement of Andrews and Littlejohn's model
which gives the combinatorial counterpart for the coefficient $a_{n,k}^{(i)}$?
\item Secondly, is there any  connection between  the model of Dumont
and  that of Andrews and Littlejohn?
\item Thirdly, is there any combinatorial interpretation for the coefficient
$b_{n,k}^{(i)}$ in the Jacobi-Stirling numbers of the  first kind, generalizing that for the Stirling number  $|s(n,k)|$?
\end{itemize}

The aim of this paper is to settle all of these questions. Additional results of the same type are also provided.

In Section 2, after introducing some necessary definitions, we give two combinatorial
interpretations  for the coefficient $a_{n,k}^{(i)}$ in
$\JS_{n}^{k}(z)$ ($0 \leq i \leq n-k$), and explicitly construct a bijection between the two models. In Section 3, we give a combinatorial interpretation for the coefficient $b_{n,k}^{(i)}$ in
$\js_{n}^{k}(z)$ ($0 \leq i \leq n-k$). In Section 4, we give the combinatorial interpretation for two sequences which are multiples of the central factorial numbers of odd indices and we also establish a simple derivation of the explicit formula of Jacobi-Stirling numbers.

\section{Jacobi-Stirling numbers of the second kind $\JS_{n}^{k}(z)$}

\subsection{First interpretation}
\label{s:interpretation} For any positive integer $n$, we define
$$
[\pm n]_0:= \{0, 1,-1,2,-2,3,-3,\ldots,n,-n\}.
$$
The following definition is equivalent to that given by Andrews and Littlejohn~\cite{AL09} in order
to interpret Legendre-Stirling numbers, where 0 is added to avoid empty block and also to
be consistent with the model  for the Jacobi-Stirling numbers of the first kind.
\begin{definition}
A \emph{signed $k$-partition} of $[\pm n]_0$ is a set partition
of $[\pm n]_0$ with $k+1$ non-empty blocks $B_0, B_1,\ldots B_k$ with the following rules:
\begin{enumerate}
\item $0\in B_0$ and $ \forall i \in [n], \ \{ i,-i \} \not\subset B_0$,
\item $\forall j\in [k]$ and $\forall i\in [n]$,  we have
$\{i,-i\}\subset B_j\Longleftrightarrow i=\min B_j\cap [n]$.
\end{enumerate}
\end{definition}

For example, the partition $\pi=\{\{  2, -5\}_0 , \{ \pm 1 , -2 \} , \{ \pm 3 \}, \{ \pm 4, 5 \}\}$
 is a signed 3-partition of $[\pm 5]_0$,
 with  $\{ 2,-5 \}_0:=\{0,2,-5\}$ being  the zero-block.

\begin{theorem} \label{thm2} For any  positive integers  $n$ and $k$,
the integer $a_{n,k}^{(i)}$ $(0\leq i\leq n-k)$ is the number of signed $k$-partitions of $[\pm n]_0$
such that the zero-block contains $i$ signed entries.
 \end{theorem}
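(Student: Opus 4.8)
The plan is to prove the equivalent identity $\JS_n^k(z)=\sum_\pi z^{w(\pi)}$, where $\pi$ ranges over all signed $k$-partitions of $[\pm n]_0$ and $w(\pi)$ is the number of negative elements of the zero block $B_0$ of $\pi$ (this is what ``signed entries'' means; by the sign-reversal involution $x\mapsto -x$ on signed $k$-partitions one could just as well count positive entries of $B_0$). Write $W_{n,k}(z)$ for this generating polynomial. By \eqref{eq:defa} the theorem is equivalent to $W_{n,k}(z)=\JS_n^k(z)$, so by \eqref{eqlnk1} it is enough to verify $W_{0,0}(z)=1$, that $W_{n,k}(z)=0$ for $n\ge 1$ and $k\notin\{1,\dots,n\}$, and
\begin{equation*}
W_{n,k}(z)=W_{n-1,k-1}(z)+k(k+z)\,W_{n-1,k}(z)\qquad(n,k\ge 1).
\end{equation*}

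The boundary cases are immediate: $\{\{0\}\}$ is the only signed $0$-partition of $\{0\}$; if $n\ge 1$ and $k=0$ then $\{1,-1\}\subset B_0$ breaks rule~(1); and rule~(2) forces each $B_j$ $(j\ge 1)$ to contain its minimal pair $\{m_j,-m_j\}$, where $m_j=\min\{\,|x|:x\in B_j\,\}$, with $m_1,\dots,m_k$ distinct, so $k\le n$. For the recurrence I would fix a signed $k$-partition $\pi$ of $[\pm n]_0$ and examine where $n$ and $-n$ lie; because $n$ has the largest absolute value, inserting or deleting $n$ or $-n$ never changes any $m_j$, so rule~(2) is insensitive to these moves. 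There are precisely four cases: (a) $n$ and $-n$ lie in one block $B_j$, $j\ge 1$, which by rule~(2) forces $B_j=\{n,-n\}$; (b) $n\in B_0$ and $-n\in B_j$, $j\in[k]$; (c) $n\in B_j$, $j\in[k]$, and $-n\in B_0$; (d) $n\in B_j$, $-n\in B_{j'}$ with distinct $j,j'\in[k]$. (Rule~(1) rules out $n,-n\in B_0$; rule~(2) rules out $n,-n$ in a common block other than $\{n,-n\}$.) In case (a), deleting the block $\{n,-n\}$ is a weight-preserving bijection onto the signed $(k-1)$-partitions of $[\pm(n-1)]_0$, giving the term $W_{n-1,k-1}(z)$. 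In cases (b)--(d), deleting $n$ and $-n$ produces a signed $k$-partition of $[\pm(n-1)]_0$ (here one uses $m_j<n$, which holds because $\{n,-n\}$ is not a block, to see that the shortened blocks stay nonempty and rules~(1)--(2) survive), and $\pi$ is recovered by reinserting $n$ and $-n$; a reinsertion is legal precisely when $n,-n$ are not both put in $B_0$ and not both put in the same $B_j$, so the legal reinsertions are exactly the $k$ of type (b), the $k$ of type (c), and the $k(k-1)$ ordered ones of type (d). The weight is unchanged except in type (c), where $-n$ joins $B_0$ and $w$ increases by $1$; hence cases (b)--(d) contribute $\bigl(k+kz+k(k-1)\bigr)W_{n-1,k}(z)=k(k+z)W_{n-1,k}(z)$. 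This establishes the recurrence, and comparing the coefficients of $z^i$ in $W_{n,k}(z)=\JS_n^k(z)$ yields the theorem.

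The substantive points to check are all contained in the previous paragraph: that (a)--(d) genuinely partition the set of signed $k$-partitions (the content of the two ``rules out'' remarks, together with rule~(1) which forces $-n\notin B_0$ when $n\in B_0$), that each reduction is indeed valued in signed partitions of the smaller ground set, and that reinsertion is inverse to reduction case by case. I expect the main obstacle to be bookkeeping rather than anything conceptual; the one point that must be got exactly right is that the extra factor of $z$ comes from, and only from, the placement of the negative element $-n$ into the zero block in case (c), which is what makes the factor $k(k+z)$ — rather than, say, $k(k+1)$ — appear.
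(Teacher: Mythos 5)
Your proof is correct and takes essentially the same route as the paper's: the paper splits the signed $k$-partitions into the same four cases according to where $n$ and $-n$ sit (the block $\{n,-n\}$; $n$ in the zero-block; $-n$ in the zero-block; $n,-n$ in two distinct non-zero blocks) and obtains the coefficientwise recurrence $\tilde a_{n,k}^{(i)}=\tilde a_{n-1,k-1}^{(i)}+k\,\tilde a_{n-1,k}^{(i-1)}+k^2\,\tilde a_{n-1,k}^{(i)}$, which is exactly the coefficient of $z^i$ in your identity $W_{n,k}(z)=W_{n-1,k-1}(z)+k(k+z)W_{n-1,k}(z)$. The only differences are packaging (generating polynomial versus coefficients) and that you verify the deletion/reinsertion bijections in somewhat more detail than the paper does.
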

\begin{proof}
Let $\mathcal{A}_{n,k}^{(i)}$ be the set of signed $k$-partitions of $[\pm n]_0$
 such that the zero-block contains $i$ signed entries and $\tilde a_{n,k}^{(i)}=|\mathcal{A}_{n,k}^{(i)}|$.
 By convention  $\tilde a_{0,0}^{(0)}=1$.
 Clearly
 $\tilde a_{1,1}^{(0)}=1$ and
for  $\tilde a_{n,k}^{(i)}\neq 0$ we must have $n\geq k\geq 1$ and  $0\leq i\leq n-k$.
We divide  $\mathcal{A}_{n,k}^{(i)}$ into four parts:
\begin{itemize}
\item[(i)] the signed $k$-partitions of $[\pm n]_0$ with $\{ -n , n \}$ as a block.
Clearly, the number of such partitions is $\tilde a_{n-1,k-1}^{(i)}$.
\item[(ii)]  the signed $k$-partitions of $[\pm n]_0$ with   $n$
in the zero-block.  We can construct such partitions by first
constructing  a signed $k$-partition of $[\pm(n-1)]_0$  with $i$ signed entries in the zero block and then insert $n$ into the zero block and
$-n$ into one of the $k$ other blocks; so there are $k \tilde a_{n-1,k}^{(i)}$ such partitions.
\item[(iii)]  the signed $k$-partitions of $[\pm n]_0$ with $-n$ in  the zero-block.
We can construct such partitions by first  constructing a signed $k$-partition of $[\pm (n-1)]_0$ with $i-1$ signed entries in the zero-block, and then placing $n$ into one of the $k$ non-empty blocks, so there are $k \tilde a_{n-1,k}^{(i-1)}$ possibilities.
\item[(iv)]  the signed $k$-partitions of $[\pm n]_0$ where neither $n$ nor $-n$ appears in the zero-block and  $\{-n,n\}$ is not  a block.
We can construct such partitions by first  choosing a signed $k$-partition of   $[\pm (n-1)]_0$ with $i$
signed entries in the zero block, and then placing $n$ and $-n$ into two different
 non-zero blocks, so there are $k(k-1) \tilde a_{n-1,k}^{(i)}$ possibilities.
\end{itemize}
Summing up  we get the following equation:
  \begin{equation} \label{eq:aux}
  \tilde a_{n,k}^{(i)} = \tilde a_{n-1,k-1}^{(i)} + k \tilde a_{n-1,k}^{(i-1)} + k^2 \tilde a_{n-1,k}^{(i)}.
  \end{equation}
 By  \eqref{eqlnk1}, it is easy to see
 that  $a_{n,k}^{(i)}$ satisfies the same recurrence and initial conditions as $ \tilde a_{n,k}^{(i)}$, so
 they agree.
   \end{proof}

Since $\LS(n,k)=\sum_{i=0}^{n-k}a_{n,k}^{(i)}$, Theorem~\ref{thm2} implies immediately the following result of
Andrews and Littlejohn  \cite{AL09}.

\begin{corollary}
The integer $\LS(n,k)$ is the number of signed $k$-partitions of $[\pm n]_0$.
\end{corollary}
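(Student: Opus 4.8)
The plan is to deduce this directly from Theorem~\ref{thm2}. First I would recall that by the definition \eqref{defLS} we have $\LS(n,k)=\JS_n^k(1)$, and by \eqref{eq:defa} in Theorem~\ref{thm1} the polynomial $\JS_n^k(z)$ expands as $\sum_{i=0}^{n-k}a_{n,k}^{(i)}z^i$; evaluating at $z=1$ therefore yields $\LS(n,k)=\sum_{i=0}^{n-k}a_{n,k}^{(i)}$. Since by Theorem~\ref{thm2} each $a_{n,k}^{(i)}$ equals the cardinality of $\mathcal{A}_{n,k}^{(i)}$, it remains only to see that the sets $\mathcal{A}_{n,k}^{(i)}$, $0\le i\le n-k$, partition the collection of all signed $k$-partitions of $[\pm n]_0$.

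For this I would argue as follows. Every signed $k$-partition $\pi$ of $[\pm n]_0$ has a well-defined zero-block $B_0$ (the unique block containing $0$), and by rule~(1) of the definition $B_0$ meets each pair $\{i,-i\}$ in at most one element, so the number of signed (negative) entries of $B_0$ is some integer $i\ge 0$; thus $\pi$ lies in exactly one $\mathcal{A}_{n,k}^{(i)}$. To see that $i$ cannot exceed $n-k$, note that the $k$ non-zero blocks $B_1,\dots,B_k$ are disjoint, so their minima $m_j=\min B_j\cap[n]$ are $k$ distinct elements of $[n]$, and by rule~(2) each pair $\{m_j,-m_j\}$ is contained in $B_j$, hence disjoint from $B_0$. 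Therefore at most $n-k$ of the $n$ pairs $\{i,-i\}$ contribute an element to $B_0$, and in particular $B_0$ has at most $n-k$ signed entries. Consequently the collection of all signed $k$-partitions of $[\pm n]_0$ is the disjoint union $\bigcup_{i=0}^{n-k}\mathcal{A}_{n,k}^{(i)}$, whose total cardinality is $\sum_{i=0}^{n-k}a_{n,k}^{(i)}=\LS(n,k)$.

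There is no genuine obstacle here: the corollary is a formal consequence of Theorem~\ref{thm2} together with the polynomial identity $\JS_n^k(z)=\sum_i a_{n,k}^{(i)}z^i$ specialized at $z=1$. The only point deserving a line of care is the remark that the statistic ``number of signed entries in the zero-block'' ranges over precisely $\{0,1,\dots,n-k\}$, which is exactly consistent with the degree $n-k$ of $\JS_n^k(z)$ asserted in Theorem~\ref{thm1}; strictly speaking one could even skip this and simply invoke that $a_{n,k}^{(i)}=0$ for $i>n-k$.
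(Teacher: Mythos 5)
Your proposal is correct and follows exactly the paper's route: the paper likewise obtains the corollary immediately from Theorem~\ref{thm2} via the identity $\LS(n,k)=\JS_n^k(1)=\sum_{i=0}^{n-k}a_{n,k}^{(i)}$. The extra verification that the statistic ``number of signed entries in the zero-block'' ranges over $\{0,\dots,n-k\}$ is a sound (if optional) elaboration of what the paper leaves implicit.
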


By Theorems~\ref{thm1}
 and \ref{thm2},  we derive  that the integer $S(n,k)$ is the number of
signed $k$-partitions of $[\pm n]_0$ such that the zero-block contains $n-k$ signed entries.
By definition, in this case, there is no positive entry in the zero-block.
By deleting the signed entries in the remaining $k$ blocks, we recover then the following known
interpretation for the Stirling number of the second kind.
\begin{corollary} \label{coroSnk} The integer $S(n,k)$ is the number of partitions of $[n]$ in $k$ blocks. \end{corollary}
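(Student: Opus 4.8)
The statement to prove is Corollary~\ref{coroSnk}: $S(n,k)$ counts the partitions of $[n]$ into $k$ blocks. The plan is to extract this directly from the two theorems already established, using the specialization argument sketched in the paragraph immediately preceding the corollary. By Theorem~\ref{thm1}, $S(n,k)=a_{n,k}^{(n-k)}$, the leading coefficient of the polynomial $\JS_n^k(z)$; by Theorem~\ref{thm2}, $a_{n,k}^{(n-k)}$ equals the number of signed $k$-partitions of $[\pm n]_0$ whose zero-block $B_0$ contains exactly $n-k$ signed entries. So the whole task reduces to exhibiting a bijection between these particular signed $k$-partitions and ordinary partitions of $[n]$ into $k$ blocks.

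First I would observe that if $B_0$ contains $n-k$ signed entries, then counting absolute values it can contain at most $n-k$ positive entries as well (since $\{i,-i\}\not\subset B_0$), and the remaining $k$ blocks $B_1,\dots,B_k$ among them contain $2n - (\text{size of }B_0\setminus\{0\})$ signed/unsigned entries. The key numerical point: rule (2) forces each $B_j$ ($j\in[k]$) to contain the pair $\{i,-i\}$ where $i=\min B_j\cap[n]$, so each of the $k$ non-zero blocks already accounts for one positive entry together with its negative; that uses up $k$ of the negative labels $\{-1,\dots,-n\}$ just from the block-minima. Since only $n-k$ negative labels remain and exactly $n-k$ of them lie in $B_0$, \emph{every} remaining negative label is in $B_0$, hence no $B_j$ with $j\ge 1$ contains any negative label other than its own minimum's negative; equivalently, $B_0$ contains all $n$ negative labels except the $k$ block-minima of $B_1,\dots,B_k$, and $B_0$ contains no positive label at all. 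This is exactly the assertion ``there is no positive entry in the zero-block'' quoted in the text, and it pins down the structure completely.

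Then I would define the map: given such a signed $k$-partition, delete $0$ and all negative entries from every block. The zero-block becomes empty and disappears; each $B_j$ ($j\in[k]$) loses only the single negative entry $-\min(B_j\cap[n])$ and becomes its nonempty positive part $B_j\cap[n]$. What results is a partition of $[n]$ into $k$ nonempty blocks. Conversely, given a partition of $[n]$ into blocks $C_1,\dots,C_k$, put $B_0=\{0\}\cup\{-i : i\in[n],\ i\notin\{\min C_1,\dots,\min C_k\}\}$ and $B_j = C_j\cup\{-\min C_j\}$; one checks rules (1) and (2) hold and that $B_0$ has exactly $n-k$ signed entries. These two maps are mutually inverse, giving the bijection.

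The only real care needed — and the step I'd flag as the place to be careful rather than a genuine obstacle — is the forcing argument in the second paragraph: one must verify that the numerical constraint ``$B_0$ has $n-k$ signed entries'' together with rule (2) genuinely forces $B_0$ to be positive-entry-free and to contain all non-block-minimum negatives, since the bijection in the third paragraph is only well-defined (in particular, only lands in partitions with exactly $k$ blocks and recovers the right preimage) once that rigidity is known. After that, invoking Theorems~\ref{thm1} and~\ref{thm2} to identify the cardinality with $S(n,k)$ is immediate.
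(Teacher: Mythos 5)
Your proposal is correct and follows exactly the paper's route: identify $S(n,k)=a_{n,k}^{(n-k)}$ via Theorems~\ref{thm1} and~\ref{thm2}, observe that a zero-block with $n-k$ signed entries forces all non-minimum negatives into $B_0$ and hence (by rule (1)) no positive entries there, and then delete $0$ and the signed entries to get an ordinary $k$-partition of $[n]$. You merely spell out the forcing argument and the inverse map that the paper leaves implicit.
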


For a partition $\pi=\{B_1,B_2,\ldots,B_k\}$ of $[n]$ in $k$ blocks, denote by $\min \pi$ the set of minima of blocks $$\min \pi =\{ \min(B_1), \ldots, \min(B_k) \}.$$
The following partition version of Dumont's interpretation for the central factorial number of even indices can be found  in \cite[Chap. 3]{FH}.
\begin{corollary} \label{coroUnk} The integer $U(n,k)$ is the number of ordered pairs $(\pi_1,\pi_2)$ of partitions of $[n]$ in $k$ blocks such that $\min( \pi_1)=\min(\pi_2)$. \end{corollary}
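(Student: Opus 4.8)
The plan is to reduce the statement to the construction of one explicit bijection, using Theorems~\ref{thm1} and \ref{thm2}. By \eqref{eq:defa} and Theorem~\ref{thm1} we have $a_{n,k}^{(0)}=U(n,k)$, and by Theorem~\ref{thm2} the integer $a_{n,k}^{(0)}$ is the number of signed $k$-partitions $\{B_0,B_1,\dots,B_k\}$ of $[\pm n]_0$ whose zero-block $B_0$ contains no negative entry, so that $B_0=\{0\}\cup S$ for some $S\subseteq[n]$ and the negative entries $-1,\dots,-n$ are distributed among $B_1,\dots,B_k$. It thus suffices to exhibit a bijection between this family of signed $k$-partitions and the set of ordered pairs $(\pi_1,\pi_2)$ of partitions of $[n]$ into $k$ blocks with $\min\pi_1=\min\pi_2$.

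For the forward map, given such a signed $k$-partition I would put, for $j\in[k]$, $B_j^{+}=B_j\cap[n]$ and $B_j^{-}=\{\,i\in[n]:-i\in B_j\,\}$, and let $m_j$ be the unique integer both of whose signs lie in $B_j$ (existence and uniqueness being part of the definition of a signed $k$-partition); then $m_j=\min B_j^{+}=\min B_j^{-}$, and the $m_j$, $j\in[k]$, are pairwise distinct. Since $B_0$ has no negative entry, $\pi_2:=\{B_1^{-},\dots,B_k^{-}\}$ is a partition of $[n]$ into $k$ blocks, and, attaching each $y\in S$ to the unique index $j$ with $y\in B_j^{-}$, the sets $B_j^{+}\cup(S\cap B_j^{-})$ form a partition $\pi_1$ of $[n]$ into $k$ blocks. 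Both $\pi_1$ and $\pi_2$ have $\{m_1,\dots,m_k\}$ as their set of block-minima (the elements adjoined to $B_j^{+}$ from $S$ all exceed $m_j$), hence $\min\pi_1=\min\pi_2$.

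For the inverse, given $(\pi_1,\pi_2)$ with $M:=\min\pi_1=\min\pi_2$ I would index the reconstructed blocks by $M$, writing $B_0$ together with $(B_m)_{m\in M}$, via the following routing. Each $m\in M$ is put into $B_m$. A non-minimal $y\in[n]\setminus M$ is put into $B_0$ if the minimum of its $\pi_1$-block equals the minimum of its $\pi_2$-block, and is otherwise kept, as the positive entry $y$, in $B_m$ where $m$ is the minimum of its $\pi_1$-block. Finally, for every $y\in[n]$, the entry $-y$ is put into $B_{m'}$, where $m'$ is the minimum of the $\pi_2$-block of $y$. One then checks that the output is a signed $k$-partition whose zero-block has no negative entry and that the two maps are mutually inverse; all of this is direct bookkeeping once the next point is settled.

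The only substantial point is to verify, for the reconstructed blocks, the defining rule of a signed $k$-partition that in each $B_m$ exactly one integer — namely $m$ — occurs with both signs. Here the hypothesis $\min\pi_1=\min\pi_2$ is essential: were there $y\neq m$ with both $y\in B_m$ and $-y\in B_m$, then $y$ would lie in the blocks of $\pi_1$ and of $\pi_2$ that have the common minimum $m$, so that (if $y\notin M$) the routing would have sent $y$ to $B_0$, and (if $y\in M$) we would get $y=m$, a contradiction either way. This is the step I expect to require the most care. Alternatively one can bypass the bijection: if $f(n,k)$ denotes the number of such ordered pairs, then analysing the largest element $n$ — which forms a singleton block of $\pi_1$ exactly when $n\in\min\pi_1=\min\pi_2$, exactly when it forms a singleton block of $\pi_2$ — yields $f(n,k)=f(n-1,k-1)+k^2 f(n-1,k)$, which together with the obvious initial values is precisely the recurrence \eqref{eqUnk} characterising $U(n,k)$.
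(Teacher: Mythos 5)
Your proof is correct and follows the same route as the paper: reduce via Theorems~\ref{thm1} and \ref{thm2} to counting signed $k$-partitions whose zero-block has no negative entries, then apply exactly the algorithm of the paper's proof (move each positive zero-block entry $j$ into the block containing $-j$, read $\pi_1$ off the positive parts and $\pi_2$ off the negated negative parts). You go beyond the paper in two sound ways: you make the inverse map explicit and verify the one delicate point (that each reconstructed block carries exactly one pair $\{m,-m\}$, which is where $\min\pi_1=\min\pi_2$ is used), and you record the independent shortcut of checking the recurrence $f(n,k)=f(n-1,k-1)+k^2f(n-1,k)$ against \eqref{eqUnk}.
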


\begin{proof} As  $U(n,k)=a_{n,k}^{(0)}$,
by Theorem \ref{thm2}, the integer $U(n,k)$ counts the number of signed $k$-partitions of $[\pm n]_0$ such that the zero-block doesn't contain any signed entry.
For any such a signed $k$-partition $\pi$,  we apply the following
algorithm:  (i)  move each positive entry $j$ of the zero-block into the block containing
 $-j$ to obtain a signed $k$-partition $\pi'=\{\{0\}, B_1, \ldots, B_k\}$, (ii) $\pi_1$ is obtained by deleting the negative entries in each block $B_i$ of $\pi'$, and $\pi_2$ is obtained by deleting the positive entries and taking the opposite values of signed entries in each block of $\pi'$.  For example,
if  $\pi=\{\{3\}_0,\{\pm1,-3,4\},\{\pm 2,-4\}\}$ is
 the signed $2$-partition of $[\pm4]_0$, the corresponding ordered pair of partitions is $(\pi_1,\pi_2)$ with $\pi_1=\{\{1,3,4\},\{2\} \}$ and $\pi_2=\{\{1,3\},\{2,4\}\}$.\end{proof}

%%%%%%%%%%%%%%%%%%%%%%%%%%%%%%%%%%%%%%%%%%%%%%

The following result shows that the coefficients in the expansion of the Jacobi-Stirling numbers $\JS_n^{k}(z)$ in the
  basis $\{(z+1)^i\}_{i=0,\ldots,n-k}$ are also interesting.

\begin{table}
\caption{The first values of $\JS_n^k(z)$ in the basis $\{(z+1)^{i}\}_{i=0,\ldots, n-k}$}
\label{Lnkz2}
\begin{tiny} \[ \begin{tabular}{c|ccccc}
$k\backslash n$ & $1$ & $2$ & $3$ & $4$ & $5$  %& $6$
\\
\hline
$1$ & $1$      &$(z+1)$     &$(z+1)^2$               & $(z+1)^3$                         &$(z+1)^4$                            %&$(z+1)^5$
\\ $2$ && $1$     & $2+3(z+1)$ &$4+10(z+1)+7(z+1)^2$    & $8+28(z+1)+34(z+1)^2+15(z+1)^3$   %&$16+72(z+1)+124(z+1)^2+98(z+1)^3+31(z+1)^4$
\\ $3$ & &&$1$    &$8+6(z+1)$  &$52+70(z+1)+25(z+1)^2$  % &$320+604(z+1)+394(z+1)^2+90(z+1)^3$
 \\
$4$ & &&&$1$   &$20+10(z+1)$   %&$292+270(z+1)+65(z+1)^2$
\\
$5$ & &&&&$1$ % &$40+15(z+1)$
\\
% $6$ & & &&&&$1$  \\
 \end{tabular} \] \end{tiny} \end{table}

\begin{theorem} Let
\begin{align}\label{z+1}
\JS_{n}^{k}(z)=d_{n,k}^{(0)}+d_{n,k}^{(1)}(z+1)+\cdots +d_{n,k}^{(n-k)}(z+1)^{n-k}.
\end{align} Then the coefficient $d_{n,k}^{(i)}$ is a positive integer, which counts the number of
signed $k$-partitions of $[\pm n]_0$ such that the zero-block contains only
zero and  $i$ negative values. \end{theorem}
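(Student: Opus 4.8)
The plan is to mirror the proof of Theorem~\ref{thm2}. Write $\tilde d_{n,k}^{(i)}$ for the number of signed $k$-partitions of $[\pm n]_0$ whose zero-block consists of $0$ together with exactly $i$ negative entries and no positive entry, with the convention $\tilde d_{0,0}^{(0)}=1$. I will show that $\tilde d_{n,k}^{(i)}$ satisfies the same recurrence and boundary conditions as the coefficient $d_{n,k}^{(i)}$ defined by \eqref{z+1}, so that the two coincide; this simultaneously shows that $d_{n,k}^{(i)}$ is a nonnegative integer, being a cardinality.

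On the algebraic side, I would first read off the recurrence for $d_{n,k}^{(i)}$: inserting the identity $k(k+z)=k(k-1)+k(z+1)$ and the expansions \eqref{z+1} (for the pairs $(n,k)$, $(n-1,k-1)$, $(n-1,k)$) into the recurrence \eqref{eqlnk1}, and comparing the coefficients of $(z+1)^i$, one obtains
$$
d_{n,k}^{(i)}=d_{n-1,k-1}^{(i)}+k(k-1)\,d_{n-1,k}^{(i)}+k\,d_{n-1,k}^{(i-1)},
$$
with $d_{0,0}^{(0)}=1$ and $d_{n,k}^{(i)}=0$ outside the range $1\le k\le n$, $0\le i\le n-k$; the shift $i\mapsto i-1$ in the last term records the effect of the factor $(z+1)$.

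On the combinatorial side, I would establish the very same recurrence for $\tilde d_{n,k}^{(i)}$ by rerunning, almost verbatim, the four-case decomposition from the proof of Theorem~\ref{thm2}, classifying a signed $k$-partition counted by $\tilde d_{n,k}^{(i)}$ according to the positions of $n$ and $-n$. The crucial point is that here the zero-block contains no positive entry, so case~(ii) of that proof — placing $n$ in the zero-block — cannot arise; the three surviving cases contribute $\tilde d_{n-1,k-1}^{(i)}$ (when $\{n,-n\}$ is a block by itself), $k\,\tilde d_{n-1,k}^{(i-1)}$ (when $-n$ lies in the zero-block and $n$ in one of the $k$ non-zero blocks), and $k(k-1)\,\tilde d_{n-1,k}^{(i)}$ (when $n$ and $-n$ lie in two distinct non-zero blocks). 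Summing gives the recurrence above with the same initial value, whence $\tilde d_{n,k}^{(i)}=d_{n,k}^{(i)}$. When this common value is nonzero — for instance when $k\ge 2$ — strict positivity follows from the recurrence, since $d_{n-1,k-1}^{(i)}$ is then positive throughout $0\le i\le n-k$.

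The only step needing genuine care is the exhaustiveness of this decomposition: I must rule out the possibility that $n$ and $-n$ lie together in a non-zero block different from $\{n,-n\}$. This is exactly where the definition of a signed partition is used — if $\{n,-n\}\subset B_j$ then rule~(2) forces $n$ to be the element of smallest absolute value of $B_j$, so every element of $B_j$ has absolute value $\ge n$ and therefore $B_j=\{n,-n\}$. What remains is routine bookkeeping, essentially identical to that already carried out for Theorem~\ref{thm2}: checking that deleting $n$ (and, in the first two cases, $-n$) from its block leaves a valid signed partition of $[\pm(n-1)]_0$ with the prescribed number of negative entries in the zero-block, and that the reverse insertions respect rules~(1) and (2). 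I expect no real difficulty there once the exhaustiveness point is settled.
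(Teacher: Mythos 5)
Your proposal is correct and follows essentially the same route as the paper: derive the recurrence $d_{n,k}^{(i)}=d_{n-1,k-1}^{(i)}+k\,d_{n-1,k}^{(i-1)}+k(k-1)\,d_{n-1,k}^{(i)}$ from \eqref{eqlnk1} via $k(k+z)=k(k-1)+k(z+1)$, and match it combinatorially by rerunning the case analysis of Theorem~\ref{thm2} with the case ``$n$ in the zero-block'' excluded. In fact you supply more detail than the paper (which only says ``similar argument''), and your remark that strict positivity can fail for $k=1$ (e.g.\ $d_{n,1}^{(i)}=0$ for $i<n-1$, since $\JS_n^1(z)=(z+1)^{n-1}$) is a point the paper's statement glosses over.
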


\begin{proof} We derive from \eqref{eqlnk1}
that the coefficients $d_{n,k}^{(i)}$ verify the following recurrence relation:
\begin{equation} \label{eqdnk}
d_{n,k}^{(i)} = d_{n-1,k-1}^{(i)} + k d_{n-1,k}^{(i-1)} + k(k-1) d_{n-1,k}^{(i)}.
\end{equation}
As for the $a_{n,k}^{(i)}$, we can prove the result by a similar argument as in proof of
Theorem~\ref{thm2}.
\end{proof}

\begin{corollary} The integer  $J_{n}^{k}(-1) = d_{n,k}^{(0)}$ is the number of
  signed $k$-partitions of $[\pm n]_0$ with $\{0\}$ as zero-block.
   \end{corollary}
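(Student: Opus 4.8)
The plan is to get this as an immediate specialization of the preceding theorem. First I would substitute $z=-1$ into the expansion \eqref{z+1}. Since $(z+1)^i$ vanishes at $z=-1$ for every $i\ge 1$ while the $i=0$ term is the constant $d_{n,k}^{(0)}$, this collapses the sum to $\JS_n^k(-1)=d_{n,k}^{(0)}$; as $J_n^k(-1)$ is merely another notation for $\JS_n^k(-1)$, the first half of the asserted identity is in hand.

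Next I would invoke the preceding theorem in the case $i=0$: it states that $d_{n,k}^{(0)}$ counts the signed $k$-partitions of $[\pm n]_0$ whose zero-block consists of $0$ together with exactly $0$ negative values, i.e.\ whose zero-block is $\{0\}$. That is verbatim the family appearing in the statement, so the two halves combine to give the corollary. In this route there is essentially no obstacle: all the content has already been proved, and the only thing to do is read off the value at $z=-1$.

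If one prefers a self-contained argument, the same conclusion follows by matching recurrences, and I would organize it so. Setting $z=-1$ in \eqref{eqlnk1} gives $\JS_n^k(-1)=\JS_{n-1}^{k-1}(-1)+k(k-1)\,\JS_{n-1}^k(-1)$ with $\JS_1^1(-1)=1$; equivalently, one can just take $i=0$ in \eqref{eqdnk}, the term $k\,d_{n-1,k}^{(-1)}$ being zero. On the combinatorial side, let $c_{n,k}$ be the number of signed $k$-partitions of $[\pm n]_0$ with zero-block $\{0\}$, and repeat the four-part division used in the proof of Theorem~\ref{thm2}, now discarding the two cases (ii) and (iii) that place $n$ or $-n$ in the zero-block: what remains is $c_{n-1,k-1}$ (when $\{-n,n\}$ forms a block) and $k(k-1)\,c_{n-1,k}$ (when $n$ and $-n$ lie in two distinct non-zero blocks). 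Hence $c_{n,k}$ obeys the same recurrence and initial condition as $\JS_n^k(-1)$, so they coincide. The only point meriting a line of care is the base case $c_{1,1}=1$, i.e.\ that $\{\{0\},\{1,-1\}\}$ is a legitimate signed $1$-partition of $[\pm 1]_0$, which is immediate from rule (2).
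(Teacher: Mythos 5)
Your first route is exactly how the paper intends this corollary to be read: evaluate the expansion \eqref{z+1} at $z=-1$ so that only the $i=0$ term survives, and then apply the preceding theorem with $i=0$, where ``zero and $0$ negative values'' means the zero-block is $\{0\}$. The additional self-contained recurrence argument is also correct (and consistent with the $i=0$ case of \eqref{eqdnk}), but the main line of your proof coincides with the paper's.
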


\begin{remark}
A priori, it was not obvious that $J_{n}^{k}(-1)$ $=\sum\limits_{i=i}^{n-k} (-1)^i a_{n,k}^{(i)}$ was positive.
\end{remark}

From Theorem~\ref{thm1} and \eqref{z+1}, we derive the following relations~:
\begin{equation} \label{eqankdnk}
 a_{n,k}^{(i)} = \sum\limits_{j=i}^{n-k} \binom{j}{i} d_{n,k}^{(j)},\quad U(n,k)=\sum\limits_{j=i}^{n-k} d_{n,k}^{(j)},\quad
 \LS(n,k)=\sum\limits_{j=i}^{n-k} 2^j d_{n,k}^{(j)}.
  \end{equation}
 We can give  combinatorial interpretations for these formulas. For example, for the first one, we can split the set $\mathcal{A}_{n,k}^{(i)}$ by counting the total number $j$ of elements in the zero-block ($1 \leq j \leq n-k$). Then to construct such an element, we first take a signed $k$-partition of $[\pm n]_0$
with no positive values in the zero-block, so there are $d_{n,k}^{(j)}$ possibilities, and then we choose the $j-i$ numbers that are positive among the $j$ possibilities in the zero-block. Similar proofs can be easily described for the two other formulas.
\medskip

%%%%%%%%%%%%%%%%%%%%%%%%%%%%%%%%%%%%%%%%
\subsection{Second interpretation}
%%%%%%%%%%%%%%%%%%%%%%%%%%%%%%%
We propose now a second model for the coefficient $a_{n,k}^{(i)}$, inspired by Foata and Sch\"{u}tzenberger \cite{FS72} and Dumont  \cite{Dumont}. Let $\mathcal{S}_n$ be the set of permutations of $[n]$. In the rest of this paper, we identify any permutation $\sigma$ in $\mathcal{S}_n$ with its diagram
${\mathcal D}(\sigma)=\{ (i,\sigma(i)): i \in [n] \}$.

For any finite set $X$, we denote by
$|X|$ its  cardinality.  If $\alpha=(i,j)\in [n]\times [n]$, we define $\pr_x(\alpha)=i$ and
 $\pr_y(\alpha)=j$ to be its $x$ and $y$ projections.
For any subset  $Q$ of $[n]\times [n]$,
we define the $x$ and $y$ projections by
\begin{align*}
\pr_x(Q)=\{\pr_x(\alpha): \alpha \in Q\},\qquad \pr_y(Q)=\{\pr_y(\alpha): \alpha\in Q\};
\end{align*}
and the supdiagonal and subdiagonal parts by
$$
Q^+=\{(i,j)\in Q: i\leq j\}, \qquad Q^{-}=\{(i,j)\in Q: i\geq j\}.
$$

\begin{definition} A \emph{simply hooked $k$-quasi-permutation} of $[n]$
is a subset $Q$ of $[n]\times [n]$ such that
\begin{itemize}
\item[i)] $Q \subset \D(\sigma)$ for some permutation  $\sigma$ of $[n]$,
\item[ii)] $|Q|=n-k$ and $\pr_x(Q^-)\cap \pr_y(Q^+)=\emptyset$.
\end{itemize}
\end{definition}
%such that $|Q \cap H_i| \leq 1$ for $i=1..n$,
% where $H_i$ is the $i$-th diagonal hook of $Q$ (see Figure \ref{hook}).
\begin{figure}
\caption{\label{hook}The diagonal hook $H_4$  and a   simply hooked quasi-permutation of $[6]$: $Q=\{(2,5),(4,2),(6,3)\}$}
{\setlength{\unitlength}{0.8mm}
\begin{picture}(32,32)(0,0)
%%%%%%%%%%%%%%%%%%%%%%%%%%%%%%%%%%%%%%%% coloriage
\put(25,10){\color{gris25}{\rule{4mm}{4mm}}}
\put(5,20){\color{gris25}{\rule{4mm}{4mm}}}
\put(15,5){\color{gris25}{\rule{4mm}{4mm}}}
%%%%%%%%%%%%%%%%%%%%%%%%%%%%%%%%%%%%%%%% QUADRILLAGE
\put(0,0){\line(1,0){30}}
\put(0,5){\line(1,0){30}}
\put(0,10){\line(1,0){30}}
\put(0,15){\line(1,0){30}}
\put(0,20){\line(1,0){30}}
\put(0,25){\line(1,0){30}}
\put(0,30){\line(1,0){30}}
\put(0,0){\line(0,1){30}}
\put(5,0){\line(0,1){30}}
\put(10,0){\line(0,1){30}}
\put(15,0){\line(0,1){30}}
\put(20,0){\line(0,1){30}}
\put(25,0){\line(0,1){30}}
\put(30,0){\line(0,1){30}}
%%%%%%%%%%%%%%%%%%%%%%%%%%%%%%%%%%%%%%%%% TRACES EN GRAS
\linethickness{0.7mm}
\put(0,20){\line(1,0){20}}
\put(0,15){\line(1,0){15}}
\put(0,15){\line(0,1){5}}
\put(20,0){\line(0,1){20}}
\put(15,0){\line(0,1){15}}
\put(15,0){\line(1,0){5}}
%%%%%%%%%%%%%%%%%%%%%%%%%%%%%%%%%%%%%%%% ELIMINATION SUD/EST
%\put(0,5){\line(1,-1){5}}
%\put(5,10){\line(1,-1){5}}
%\put(10,15){\line(1,-1){5}}
%\put(15,20){\line(1,-1){5}}
%\put(0,0){\line(1,1){20}}
\end{picture}}
\end{figure}

A simply hooked $k$-quasi-permutation $Q$ of $[n]$
 can be depicted by darkening  the $n-k$ corresponding boxes of $Q$ in the $n \times n$ square tableau.
Conversely, if we define the diagonal hook $ H_i:=\{(i,j): i\leq j\}\cup \{(j,i): i\leq j\}$ ($1\leq i\leq n$),
% $$
% H_i:=\{(i,j)|i\leq j\}\cup \{(j,i)|i\leq j\},
% $$
then a black subset of  the $n \times n$ square tableau represents a  simply hooked quasi-permutation if
there is no black box on  the main diagonal  and at most  one black
 box in each row, in each column and in each  diagonal hook.
 An example is given in  Figure \ref{hook}.

\begin{theorem} \label{mainthm} The integer  $a_{n,k}^{(i)}$ $(1\leq i\leq n-k)$ is the number of
 ordered pairs $(Q_1,Q_2)$ of simply hooked $k$-quasi-permutations of $[n]$ satisfying the following conditions:
 \begin{align}
 Q_1^-=Q_2^-,\quad
 |Q_1^-|=|Q_2^-|=i \quad\textrm{and}\quad\pr_y(Q_1)=\pr_y(Q_2).\label{eq:quasi}
 \end{align}
\end{theorem}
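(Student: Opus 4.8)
The plan is to imitate the proof of Theorem~\ref{thm2}: let $\mathcal{B}_{n,k}^{(i)}$ denote the set of ordered pairs $(Q_1,Q_2)$ of simply hooked $k$-quasi-permutations of $[n]$ satisfying \eqref{eq:quasi}, put $\tilde b_{n,k}^{(i)}=|\mathcal{B}_{n,k}^{(i)}|$, and show that $\tilde b_{n,k}^{(i)}$ satisfies the recurrence \eqref{eq:aux} together with the same boundary values as $a_{n,k}^{(i)}$. For the boundary values: when $n=k$ both diagrams are forced to be empty, so $i=0$ and $\tilde b_{k,k}^{(0)}=1$; moreover $\tilde b_{n,k}^{(i)}=0$ as soon as $i\notin\{0,\dots,n-k\}$, and also when $k=0<n$, because a $0$-quasi-permutation of $[n]$ would be a full permutation matrix, hence would carry both a box in row $n$ and a box in column $n$ (necessarily distinct, since the diagonal is empty), forcing $n\in\pr_x(Q^-)\cap\pr_y(Q^+)$ against condition ii).

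For the recurrence I would fix $(Q_1,Q_2)\in\mathcal{B}_{n,k}^{(i)}$ and inspect row $n$ and column $n$. A box in row $n$ has the form $(n,c)$ with $c<n$, hence is subdiagonal and lies in the common part $Q_1^-=Q_2^-$; so row $n$ carries the same box in both diagrams, or none. A box in column $n$ has the form $(r,n)$ with $r<n$, hence is supdiagonal; since $\pr_y(Q_1)=\pr_y(Q_2)$, column $n$ is occupied in $Q_1$ exactly when it is occupied in $Q_2$, but the row indices $r_1,r_2$ of these two boxes need not agree. If $Q_1$ carried both a box in row $n$ and a box in column $n$, then $n$ would lie in both $\pr_x(Q_1^-)$ and $\pr_y(Q_1^+)$, contradicting condition ii); so this case is vacuous, and $\mathcal{B}_{n,k}^{(i)}$ splits into exactly three classes. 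If neither border is used, deleting the empty last row and column sets up a bijection with $\mathcal{B}_{n-1,k-1}^{(i)}$. If row $n$ carries a box $(n,c)$ and column $n$ is empty, removing $(n,c)$ from both diagrams gives a pair in $\mathcal{B}_{n-1,k}^{(i-1)}$, and $(n,c)$ can be reinserted in any of the $k$ columns left free, so this class has size $k\,\tilde b_{n-1,k}^{(i-1)}$. If column $n$ carries boxes $(r_1,n)$ and $(r_2,n)$ and row $n$ is empty, deleting column $n$ gives a pair in $\mathcal{B}_{n-1,k}^{(i)}$, while $r_1,r_2$ range independently over the $k$ rows left free in $Q_1$ and in $Q_2$ respectively, so this class has size $k^2\,\tilde b_{n-1,k}^{(i)}$. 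Adding the three contributions yields \eqref{eq:aux}, and comparison with \eqref{eqlnk1} finishes the proof.

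The step I expect to be the main obstacle is the claim, used twice above, that a box may be reinserted freely — that is, that condition ii) is automatically restored. This holds because adding a subdiagonal box in a previously empty column leaves $\pr_y(Q^+)$ unchanged and enlarges $\pr_x(Q^-)$ only by the new index $n\notin\pr_y(Q^+)$, and dually adding a supdiagonal box in a previously empty row leaves $\pr_x(Q^-)$ unchanged and enlarges $\pr_y(Q^+)$ only by $n\notin\pr_x(Q^-)$; in either case no forbidden common element is created, and in particular the ``at most one box per diagonal hook'' feature imposes no extra constraint here. One must also keep track of the slight asymmetry between rows and columns (a row-$n$ box is shared by $Q_1$ and $Q_2$, a column-$n$ box need not be), which is precisely what produces the $k$ versus $k^2$ in the two middle terms. (Alternatively, one could deduce Theorem~\ref{mainthm} from Theorem~\ref{thm2} by an explicit bijection between $\mathcal{B}_{n,k}^{(i)}$ and the signed $k$-partitions of $[\pm n]_0$ whose zero-block has $i$ signed entries; the recurrence route sketched above is shorter.)
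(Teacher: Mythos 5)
Your proposal is correct and follows essentially the same route as the paper's own proof: the identical three-way decomposition according to whether the last row and last column of the diagrams are occupied, yielding the recurrence $c_{n,k}^{(i)} = c_{n-1,k-1}^{(i)} + k\,c_{n-1,k}^{(i-1)} + k^2\,c_{n-1,k}^{(i)}$ and a comparison with \eqref{eqlnk1}. The only differences are cosmetic (you use the transposed row/column convention relative to the paper's figures) together with some extra care about the vacuous fourth case, the boundary values, and the validity of reinsertion, all of which the paper leaves implicit.
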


\begin{proof}
Let  $\mathcal{C}_{n,k}^{(i)}$ be the set of ordered pairs $(Q_1,Q_2)$ of simply hooked $k$-quasi-permutations of $[n]$ verifying
\eqref{eq:quasi}, and let $c_{n,k}^{(i)}=|\mathcal{C}_{n,k}^{(i)}|$.

For example, the ordered pair $(Q_1,Q_2)$ with
\begin{equation}\label{eq:qq}
\begin{split}
Q_1&=\{ (1,3),(2,5),(3,7),(4,1),(5,6),(8,2),(10,9) \},\\
Q_2&=\{ (1,5),(2,3),(3,6),(4,1),(5,7),(8,2),(10,9) \},
\end{split}
\end{equation}
 is an element of $\mathcal{C}_{10,3}^{(3)}$. A graphical representation is given in  Figure \ref{couple}.

\begin{figure}
\caption{\label{couple}An ordered pair of simply hooked quasi-permutations  in $\mathcal{C}_{10,3}^{(3)}$}
{\setlength{\unitlength}{0.8mm}
\begin{picture}(50,50)(0,0)
%%%%%%%%%%%%%%%%%%%%%%%%%%%%%%%%%%%%%%%% coloriage
\put(0,10){\color{gris25}{\rule{4mm}{4mm}}}
\put(5,20){\color{gris25}{\rule{4mm}{4mm}}}
\put(10,30){\color{gris25}{\rule{4mm}{4mm}}}
\put(15,0){\color{gris25}{\rule{4mm}{4mm}}}
\put(20,25){\color{gris25}{\rule{4mm}{4mm}}}
\put(35,5){\color{gris25}{\rule{4mm}{4mm}}}
\put(45,40){\color{gris25}{\rule{4mm}{4mm}}}
%%%%%%%%%%%%%%%%%%%%%%%%%%%%%%%%%%%%%%%% QUADRILLAGE
\put(0,0){\line(1,0){50}}
\put(0,5){\line(1,0){50}}
\put(0,10){\line(1,0){50}}
\put(0,15){\line(1,0){50}}
\put(0,20){\line(1,0){50}}
\put(0,25){\line(1,0){50}}
\put(0,30){\line(1,0){50}}
\put(0,35){\line(1,0){50}}
\put(0,40){\line(1,0){50}}
\put(0,45){\line(1,0){50}}
\put(0,50){\line(1,0){50}}
\put(0,0){\line(0,1){50}}
\put(5,0){\line(0,1){50}}
\put(10,0){\line(0,1){50}}
\put(15,0){\line(0,1){50}}
\put(20,0){\line(0,1){50}}
\put(25,0){\line(0,1){50}}
\put(30,0){\line(0,1){50}}
\put(35,0){\line(0,1){50}}
\put(40,0){\line(0,1){50}}
\put(45,0){\line(0,1){50}}
\put(50,0){\line(0,1){50}}
%%%%%%%%%%%%%%%%%%%%%%%%%%%%%%%%%%%%%%%% ELIMINATION SUD/EST
%\put(0,5){\line(1,-1){5}}
%\put(5,10){\line(1,-1){5}}
%\put(10,15){\line(1,-1){5}}
%\put(15,20){\line(1,-1){5}}
%\put(20,25){\line(1,-1){5}}
%\put(25,30){\line(1,-1){5}}
%\put(0,0){\line(1,1){30}}
\end{picture} \hspace{0.5cm} \begin{picture}(50,50)(0,0)
%%%%%%%%%%%%%%%%%%%%%%%%%%%%%%%%%%%%%%%% coloriage
\put(0,20){\color{gris25}{\rule{4mm}{4mm}}}
\put(5,10){\color{gris25}{\rule{4mm}{4mm}}}
\put(10,25){\color{gris25}{\rule{4mm}{4mm}}}
\put(15,0){\color{gris25}{\rule{4mm}{4mm}}}
\put(20,30){\color{gris25}{\rule{4mm}{4mm}}}
\put(35,5){\color{gris25}{\rule{4mm}{4mm}}}
\put(45,40){\color{gris25}{\rule{4mm}{4mm}}}
%%%%%%%%%%%%%%%%%%%%%%%%%%%%%%%%%%%%%%%% QUADRILLAGE
\put(0,0){\line(1,0){50}}
\put(0,5){\line(1,0){50}}
\put(0,10){\line(1,0){50}}
\put(0,15){\line(1,0){50}}
\put(0,20){\line(1,0){50}}
\put(0,25){\line(1,0){50}}
\put(0,30){\line(1,0){50}}
\put(0,35){\line(1,0){50}}
\put(0,40){\line(1,0){50}}
\put(0,45){\line(1,0){50}}
\put(0,50){\line(1,0){50}}
\put(0,0){\line(0,1){50}}
\put(5,0){\line(0,1){50}}
\put(10,0){\line(0,1){50}}
\put(15,0){\line(0,1){50}}
\put(20,0){\line(0,1){50}}
\put(25,0){\line(0,1){50}}
\put(30,0){\line(0,1){50}}
\put(35,0){\line(0,1){50}}
\put(40,0){\line(0,1){50}}
\put(45,0){\line(0,1){50}}
\put(50,0){\line(0,1){50}}
%%%%%%%%%%%%%%%%%%%%%%%%%%%%%%%%%%%%%%%% ELIMINATION SUD/EST
%\put(0,5){\line(1,-1){5}}
%\put(5,10){\line(1,-1){5}}
%\put(10,15){\line(1,-1){5}}
%\put(15,20){\line(1,-1){5}}
%\put(20,25){\line(1,-1){5}}
%\put(25,30){\line(1,-1){5}}
%\put(0,0){\line(1,1){30}}
\end{picture}} \end{figure}

We divide the set $\mathcal{C}_{n,k}^{(i)}$ into three parts:
\begin{itemize}
\item the ordered pairs $(Q_1,Q_2)$ such that  the $n$-th rows and $n$-th columns of
$Q_1$ and $Q_2$  are empty. Clearly, there are $c_{n-1,k-1}^{(i)}$  such elements.
\item
the ordered pairs $(Q_1,Q_2)$ such that  the  $n$-th columns of
$Q_1$ and $Q_2$  are not empty.
We can first construct an ordered pair $(Q_1',Q_2')$ of $\mathcal{C}_{n-1,k}^{(i-1)}$  and then choose a
box in  the same position of
 the $n$-th column  of  both simply hooked quasi-permutations, there are
 $n-1-(n-k-1)=k$ positions available.   So there are
$k c_{n-1,k}^{(i-1)}$ such elements.
\item the ordered pairs $(Q_1,Q_2)$ such that  the $n$-th rows of $Q_1$ and $Q_2$
are not empty.
We can first construct an ordered pair $(Q_1',Q_2')$ of $\mathcal{C}_{n-1,k}^{(i)}$  and then add
a black box in the top of  both simply hooked quasi-permutations, the box can be placed
on any of the $n-1-(n-k-1)=k$ positions whose columns are  empty. So there are
$k^2 c_{n-1,k}^{(i)}$ such elements.
\end{itemize}
In conclusion, we obtain the recurrence
 \begin{equation} \label{eqcnk}
 c_{n,k}^{(i)} = c_{n-1,k-1}^{(i)} + k c_{n-1,k}^{(i-1)} + k^2 c_{n-1,k}^{(i)}.
  \end{equation}
By  \eqref{eqlnk1}, we see that  $a_{n,k}^{(i)}$ satisfies the same
recurrence relation and the initial conditions as $ c_{n,k}^{(i)}$, so they agree.
  \end{proof}

\begin{remark} In the first model, we don't have a direct interpretation for the integer $k^2$ in \eqref{eq:aux} because it results from after the simplification $k+k(k-1)=k^2$. While in the second one, we can
see what the coefficient $k^2$ counts in \eqref{eqcnk}.
\end{remark}

\begin{definition} A  \emph{supdiagonal}  (resp. \emph{subdiagonal})
\emph{quasi-permutation} of $[n]$ is a simply hooked  quasi-permutation $Q$ of $[n]$
with  $Q^-=\emptyset$ (resp. $Q^+=\emptyset$).
\end{definition}

From Theorems~\ref{thm1} and \ref{mainthm},
 we recover Dumont's combinatorial interpretation for  the central factorial numbers of the second kind \cite{Dumont},
 and Riordan's interpretation for the Stirling numbers of the second kind (see \cite[Prop. 2.7]{FS72}).

\begin{corollary} \label{coroTnk} The integer $U(n,k)$ is the number of ordered pairs $(Q_1,Q_2)$ of supdiagonal $k$-quasi-permutations of $[n]$ such that $pr_y(Q_1)=pr_y(Q_2)$. \end{corollary}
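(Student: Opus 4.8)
The plan is to obtain this corollary as the $i=0$ specialization of Theorem~\ref{mainthm}, combined with Theorem~\ref{thm1}. First I would use Theorem~\ref{thm1} to rewrite $U(n,k)=a_{n,k}^{(0)}$, the constant term of $\JS_n^k(z)$. Then I would apply the model of Theorem~\ref{mainthm} with $i=0$: the integer $a_{n,k}^{(0)}$ counts the ordered pairs $(Q_1,Q_2)$ of simply hooked $k$-quasi-permutations of $[n]$ with $Q_1^-=Q_2^-$, $|Q_1^-|=|Q_2^-|=0$ and $\pr_y(Q_1)=\pr_y(Q_2)$. The conditions $|Q_1^-|=|Q_2^-|=0$ say exactly that $Q_1^-=Q_2^-=\emptyset$, i.e.\ that both $Q_1$ and $Q_2$ are supdiagonal $k$-quasi-permutations; the condition $Q_1^-=Q_2^-$ then holds trivially, and the only surviving requirement from \eqref{eq:quasi} is $\pr_y(Q_1)=\pr_y(Q_2)$. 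This is precisely the family of pairs in the statement, which would finish the proof (and recover Dumont's interpretation of $U(n,k)$).

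The only point needing care is that Theorem~\ref{mainthm} was phrased for $1\le i\le n-k$, so I must justify the value $i=0$ separately. This is routine: running the three-case decomposition from the proof of Theorem~\ref{mainthm} with $i=0$, the second case (pairs whose $n$-th columns are non-empty) is empty, since putting a box in column $n$ produces a subdiagonal entry, impossible when $Q^-=\emptyset$; equivalently the term $c_{n-1,k}^{(-1)}$ vanishes. Hence \eqref{eqcnk} collapses to $c_{n,k}^{(0)}=c_{n-1,k-1}^{(0)}+k^2c_{n-1,k}^{(0)}$, with $c_{1,1}^{(0)}=1$ and $c_{n,k}^{(0)}=0$ unless $1\le k\le n$. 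Comparing with \eqref{eqUnk} and the initial conditions for $U(n,k)$ gives $c_{n,k}^{(0)}=U(n,k)=a_{n,k}^{(0)}$, as needed.

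I do not anticipate a genuine obstacle: this corollary is essentially a direct reading-off of the earlier results. The only thing worth spelling out, besides the $i=0$ boundary case above, is the translation ``$Q^-=\emptyset\Longleftrightarrow Q$ supdiagonal'', which is immediate from the definition of a supdiagonal quasi-permutation together with the fact that the constraint $\pr_x(Q^-)\cap\pr_y(Q^+)=\emptyset$ becomes vacuous once $Q^-=\emptyset$.
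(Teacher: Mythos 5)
Your proposal is correct and follows exactly the paper's route: the corollary is stated there as an immediate consequence of Theorems~\ref{thm1} and \ref{mainthm}, obtained by setting $i=0$ so that $Q_1^-=Q_2^-=\emptyset$ and only the condition $\pr_y(Q_1)=\pr_y(Q_2)$ survives. Your extra paragraph justifying the $i=0$ boundary case (the vanishing of the $k\,c_{n-1,k}^{(i-1)}$ term, reducing \eqref{eqcnk} to the recurrence \eqref{eqUnk} for $U(n,k)$) is a detail the paper leaves implicit, since Theorem~\ref{mainthm} is phrased for $1\le i\le n-k$; it is a welcome but routine addition rather than a different argument.
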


\begin{corollary}  The integer
$S(n,k)$ is the number of subdiagonal (resp. supdiagonal) $k$-quasi-permutations of $[n]$.
\end{corollary}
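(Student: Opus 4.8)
The plan is to obtain the statement directly from Theorems~\ref{thm1} and~\ref{mainthm} by specializing to the leading coefficient. By Theorem~\ref{thm1} we have $S(n,k)=a_{n,k}^{(n-k)}$. If $n>k$, then $n-k\geq 1$, so Theorem~\ref{mainthm} applies with $i=n-k$ and tells us that $a_{n,k}^{(n-k)}$ is the number of ordered pairs $(Q_1,Q_2)$ of simply hooked $k$-quasi-permutations of $[n]$ with $Q_1^-=Q_2^-$, $|Q_1^-|=|Q_2^-|=n-k$ and $\pr_y(Q_1)=\pr_y(Q_2)$. I would then argue that in this extremal case the pair necessarily degenerates to $(Q,Q)$ with $Q$ subdiagonal, so that this set of pairs is in bijection with the set of subdiagonal $k$-quasi-permutations of $[n]$; the supdiagonal assertion then follows by transposition.

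For the degeneracy step: a simply hooked quasi-permutation of $[n]$ has no box on the main diagonal, hence $Q=Q^+\sqcup Q^-$ for each of $Q_1,Q_2$. Since $|Q_r|=n-k$, the equality $|Q_r^-|=n-k$ forces $Q_r^+=\emptyset$, i.e. $Q_1$ and $Q_2$ are both subdiagonal; then $Q_1^-=Q_2^-$ gives $Q_1=Q_2$, and the requirement $\pr_y(Q_1)=\pr_y(Q_2)$ is automatically satisfied. Thus $(Q_1,Q_2)\mapsto Q_1$ is a bijection from the set enumerated by $a_{n,k}^{(n-k)}$ onto the set of subdiagonal $k$-quasi-permutations of $[n]$, which yields that $S(n,k)$ equals the number of subdiagonal $k$-quasi-permutations of $[n]$. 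The boundary case $n=k$ (not covered by Theorem~\ref{mainthm} as stated) is immediate, since $S(n,n)=1$ and the empty set is the unique subdiagonal $n$-quasi-permutation of $[n]$.

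For the ``resp. supdiagonal'' part I would use transposition $Q\mapsto Q^{T}:=\{(j,i):(i,j)\in Q\}$. It sends subsets of a permutation diagram to subsets of a permutation diagram (equivalently, it preserves the property of having at most one element in each row and each column), preserves cardinality, interchanges rows with columns, and maps each diagonal hook $H_m$ onto itself; moreover $(Q^{T})^{-}$ is the transpose of $Q^{+}$ and $(Q^{T})^{+}$ the transpose of $Q^{-}$, so $Q^{T}$ again satisfies $\pr_x((Q^{T})^{-})\cap\pr_y((Q^{T})^{+})=\emptyset$ and is again a simply hooked $k$-quasi-permutation. Since $Q$ is subdiagonal exactly when $Q^{T}$ is supdiagonal, transposition is a bijection between the two families, whence supdiagonal $k$-quasi-permutations of $[n]$ are also counted by $S(n,k)$.

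There is no real obstacle here once Theorems~\ref{thm1} and~\ref{mainthm} are in hand: the only points requiring attention are the degeneracy of the pair $(Q_1,Q_2)$ at $i=n-k$, the edge case $n=k$, and the routine verification that transposition respects the defining conditions of a simply hooked quasi-permutation. As an alternative, one may note that for a subdiagonal quasi-permutation the hook condition reduces to the column condition, so these objects are exactly the placements of $n-k$ non-attacking rooks strictly below the main diagonal of the $n\times n$ board, which are classically enumerated by $S(n,k)$.
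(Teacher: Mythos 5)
Your argument is correct and is exactly the derivation the paper leaves implicit: specializing Theorem~\ref{mainthm} to $i=n-k$, observing that $|Q_r^-|=|Q_r|$ forces $Q_1=Q_2$ subdiagonal, and passing to the supdiagonal case by transposition. The added care about the boundary case $n=k$ and the verification that transposition preserves the simply hooked conditions are fine and do not change the approach.
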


\begin{remark}\label{remark3}
To recover the classical  interpretation of $S(n,k)$ in  Corollary~\ref{coroSnk},
we can apply a simple  bijection, say $\varphi$,  in \cite[Prop. 3]{FS72}.
Starting from a $k$-partition  $\pi=\{B_1,\ldots,B_k\}$  of $[n]$,
for each non-singleton block  $B_i=\{ p_1, p_2, \ldots , p_{n_i} \}$ with $n_i \geq 2$ elements $p_1 < p_2 < \ldots < p_{n_i}$, we associate the subdiagonal quasi-permutation
$Q_i=\{ (p_{n_i},p_{n_{i-1}}),(p_{n_{i-1}},p_{n_{i-2}}),\ldots,(p_2,p_1) \}$
with $n_i-1$ elements of $[n] \times [n]$. Clearly, the union  of all such  $Q_i's$
 is a subdiagonal quasi-permutation of cardinality $n-k$. An example of the map $\varphi$ is given in Figure~\ref{permutation}.
 \end{remark}

\begin{figure}
\caption{ \label{permutation}
The subdiagonal quasi-permutation corresponding to a partition via the map $\varphi$}
\begin{picture}(30,30)(60,-25)
\put(15,15){$\pi=\{ \{1,4,6\}, \{2,5\}, \{3\} \} \ \longrightarrow$}
\end{picture}
{\setlength{\unitlength}{0.8mm}
\begin{picture}(30,30)(-30,1)
%%%%%%%%%%%%%%%%%%%%%%%%%%%%%%%%%%%%%%%% coloriage
\put(15,0){\color{gris25}{\rule{4mm}{4mm}}}
\put(20,5){\color{gris25}{\rule{4mm}{4mm}}}
\put(25,15){\color{gris25}{\rule{4mm}{4mm}}}
%%%%%%%%%%%%%%%%%%%%%%%%%%%%%%%%%%%%%%%% QUADRILLAGE
\put(0,0){\line(1,0){30}}
\put(0,5){\line(1,0){30}}
\put(0,10){\line(1,0){30}}
\put(0,15){\line(1,0){30}}
\put(0,20){\line(1,0){30}}
\put(0,25){\line(1,0){30}}
\put(0,30){\line(1,0){30}}
\put(0,0){\line(0,1){30}}
\put(5,0){\line(0,1){30}}
\put(10,0){\line(0,1){30}}
\put(15,0){\line(0,1){30}}
\put(20,0){\line(0,1){30}}
\put(25,0){\line(0,1){30}}
\put(30,0){\line(0,1){30}}
\end{picture}}
\end{figure}

Finally, we derive from Theorem~\ref{mainthm} and \eqref{defLS} a new combinatorial interpretation for the Legendre-Stirling numbers of the second kind.
The correspondence between the two models will be established in the next subsection.

\begin{corollary} \label{coroPSnk} The integer $\LS(n,k)$ is the number of ordered pairs $(Q_1,Q_2)$ of simply hooked $k$-quasi-permutations of $[n]$ such that $pr_y(Q_1)=pr_y(Q_2)$. \end{corollary}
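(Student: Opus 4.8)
The plan is to deduce the statement directly from Theorem~\ref{mainthm} by summing over the exponent $i$ and specialising to $z=1$. By \eqref{defLS} we have $\LS(n,k)=\JS_n^k(1)$, and by Theorem~\ref{thm1} together with \eqref{eq:defa}, $\JS_n^k(z)=\sum_{i=0}^{n-k}a_{n,k}^{(i)}z^i$. Evaluating at $z=1$ therefore gives $\LS(n,k)=\sum_{i=0}^{n-k}a_{n,k}^{(i)}$, so it remains to interpret each coefficient $a_{n,k}^{(i)}$ combinatorially and to check that the resulting families of ordered pairs assemble in the right way.

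For $1\le i\le n-k$, Theorem~\ref{mainthm} already gives $a_{n,k}^{(i)}=|\mathcal{C}_{n,k}^{(i)}|$, where $\mathcal{C}_{n,k}^{(i)}$ is the set of ordered pairs $(Q_1,Q_2)$ of simply hooked $k$-quasi-permutations of $[n]$ with $Q_1^-=Q_2^-$, $|Q_1^-|=|Q_2^-|=i$ and $\pr_y(Q_1)=\pr_y(Q_2)$. The value $i=0$ is not formally covered by the statement of Theorem~\ref{mainthm}, but it is supplied by Corollary~\ref{coroTnk}: a simply hooked quasi-permutation $Q$ has $Q^-=\emptyset$ exactly when it is supdiagonal, so $\mathcal{C}_{n,k}^{(0)}$ is the set of ordered pairs of supdiagonal $k$-quasi-permutations of $[n]$ with equal $y$-projection, whose cardinality is $U(n,k)=a_{n,k}^{(0)}$. (Equivalently, one checks that the case analysis in the proof of Theorem~\ref{mainthm} is valid for $i=0$ as well, the middle term $k\,c_{n-1,k}^{(i-1)}$ of \eqref{eqcnk} then being empty, so that $c_{n,k}^{(0)}$ obeys the same recurrence and initial conditions \eqref{eqlnk1} as $a_{n,k}^{(0)}$.) Summing over $i$, we get $\LS(n,k)=\sum_{i=0}^{n-k}|\mathcal{C}_{n,k}^{(i)}|$, and since the sets $\mathcal{C}_{n,k}^{(i)}$ are pairwise disjoint — a pair $(Q_1,Q_2)$ can lie in $\mathcal{C}_{n,k}^{(i)}$ only for $i=|Q_1^-|$ — this common value is the cardinality of the disjoint union $\bigsqcup_{i=0}^{n-k}\mathcal{C}_{n,k}^{(i)}$, namely the set of ordered pairs $(Q_1,Q_2)$ of simply hooked $k$-quasi-permutations of $[n]$ with $\pr_y(Q_1)=\pr_y(Q_2)$, the index $i$ merely recording the common cardinality $|Q_1^-|=|Q_2^-|$. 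This is the claim.

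Since essentially all of the work has already been done in Theorem~\ref{mainthm}, I do not anticipate a genuine obstacle here: the proof is a one-line specialisation followed by an elementary summation. The only steps calling for a word of justification are the boundary case $i=0$ (handled via Corollary~\ref{coroTnk} or by re-running the recurrence argument) and the bookkeeping remark that aggregating the constrained families $\mathcal{C}_{n,k}^{(i)}$ over all admissible $i$ reconstitutes precisely the collection of pairs named in the statement.
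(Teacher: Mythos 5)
Your reduction of the statement to Theorem~\ref{mainthm} is exactly the route the paper intends (the paper offers nothing more than ``we derive from Theorem~\ref{mainthm} and \eqref{defLS}''), and the first part of your argument is fine: $\LS(n,k)=\JS_n^k(1)=\sum_{i=0}^{n-k}a_{n,k}^{(i)}=\sum_{i=0}^{n-k}|\mathcal{C}_{n,k}^{(i)}|$, with the boundary case $i=0$ correctly supplied by Corollary~\ref{coroTnk} (or by rerunning the recurrence). The gap is in your very last identification. The disjoint union $\bigsqcup_{i}\mathcal{C}_{n,k}^{(i)}$ is \emph{not} the set of all ordered pairs of simply hooked $k$-quasi-permutations with $\pr_y(Q_1)=\pr_y(Q_2)$: membership in $\mathcal{C}_{n,k}^{(i)}$ also requires $Q_1^-=Q_2^-$, and this condition is genuinely restrictive --- it is not implied by equality of the $y$-projections, and the index $i$ does not ``merely record'' a quantity already determined by the pair. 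Concretely, for $n=3$, $k=1$, take $Q_1=\{(2,1),(3,2)\}$ and $Q_2=\{(1,2),(3,1)\}$. Both are simply hooked $1$-quasi-permutations of $[3]$ in the sense of the Definition (check: $\pr_x(Q_1^-)\cap\pr_y(Q_1^+)=\{2,3\}\cap\emptyset=\emptyset$ and $\pr_x(Q_2^-)\cap\pr_y(Q_2^+)=\{3\}\cap\{2\}=\emptyset$), and $\pr_y(Q_1)=\pr_y(Q_2)=\{1,2\}$, yet $Q_1^-=Q_1$ has two elements while $Q_2^-=\{(3,1)\}$ has one, so $(Q_1,Q_2)$ lies in no $\mathcal{C}_{3,1}^{(i)}$. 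Counting all pairs with equal $y$-projection for $(n,k)=(3,1)$ gives $6$, whereas $\LS(3,1)=4=\sum_i|\mathcal{C}_{3,1}^{(i)}|$.

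The discrepancy is inherited from the paper: the corollary as printed drops the condition $Q_1^-=Q_2^-$, which is vacuous in the supdiagonal setting of Corollary~\ref{coroTnk} but not here. What your argument actually proves --- and what the statement should say --- is that $\LS(n,k)$ counts ordered pairs $(Q_1,Q_2)$ of simply hooked $k$-quasi-permutations of $[n]$ with $Q_1^-=Q_2^-$ \emph{and} $\pr_y(Q_1)=\pr_y(Q_2)$ (the bijection at the end of Section~2.3, which produces pairs of the form $(\overline{P_1}\cup P_3,\overline{P_2}\cup P_3)$ sharing the same subdiagonal part $P_3$, confirms this reading). So either add that hypothesis to the statement, or supply a separate argument; as written, the final aggregation step is false.
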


\begin{remark}
We haven't found an interpretation neither for the numbers $d_{n,k}^{(i)}$  in \eqref{z+1}, nor for the formulas expressed in \eqref{eqankdnk}, in terms of simply hooked quasi-permutations.
\end{remark}

%%%%%%%%%%%%%%%%%%%%%%%%%%%%%
\subsection{The link between the two models}
\label{s:bijection}

We introduce a third interpretation which permits to make the connection easier between the two previous models. Let $\Pi_{n,k}$ be the set of partitions of $[n]$ in $k$ non-empty blocks.

\begin{definition} \label{eq:def4} Let $\mathcal{B}_{n,k}^{(i)}$ be
the set of
 triples $(\pi_1,\pi_2,\pi_3)$ in $\Pi_{n,k+i} \times \Pi_{n,k+i} \times \Pi_{n,n-i}$ such that:
 \begin{itemize}
\item[i)] $\min(\pi_1)=\min(\pi_2)$ and $\Sing(\pi_1)=\Sing(\pi_2)$,
\item[ii)] $\min(\pi_1) \cup \Sing(\pi_3) =\Sing(\pi_1) \cup \min(\pi_3) = [n]$,
\end{itemize}
where $\Sing(\pi)$ denotes the set of singletons in $\pi$.
\end{definition}

We will need the following result.
\begin{lemma}
For $(\pi_1,\pi_2,\pi_3) \in \mathcal{B}_{n,k}^{(i)}$, we have:
\begin{itemize}
\item[i)] $|\min(\pi_1) \cap \min (\pi_3) | = k$,
\item[ii)]$|\Sing(\pi_1) \setminus \min(\pi_3)| = i$,
\item[iii)]$|\Sing(\pi_3) \setminus \min(\pi_1)|=n-k-i$.
\end{itemize}
\end{lemma}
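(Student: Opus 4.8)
The three claims are about the sizes of various intersections and set differences for a triple $(\pi_1,\pi_2,\pi_3)\in\mathcal{B}_{n,k}^{(i)}$, so the strategy is pure bookkeeping with the two defining conditions i) and ii) of Definition~\ref{eq:def4}, together with the fact that $\pi_1,\pi_2\in\Pi_{n,k+i}$ and $\pi_3\in\Pi_{n,n-i}$. The two facts I will use repeatedly are: (a) for any partition $\pi$ of $[n]$ into $m$ blocks, $|\min(\pi)|=m$ and $\mathrm{Sing}(\pi)\subseteq\min(\pi)$; and (b) a block is a singleton iff its unique element lies in both $\min(\pi)$ and — trivially — is the whole block, so $|\mathrm{Sing}(\pi)|$ equals the number of singleton blocks. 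Thus $|\min(\pi_1)|=|\min(\pi_2)|=k+i$ and $|\min(\pi_3)|=n-i$, while $\mathrm{Sing}(\pi_1)=\mathrm{Sing}(\pi_2)$ and $\mathrm{Sing}(\pi_1)\subseteq\min(\pi_1)$, $\mathrm{Sing}(\pi_3)\subseteq\min(\pi_3)$.

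For iii), I would start from condition ii), which gives $\min(\pi_1)\cup\mathrm{Sing}(\pi_3)=[n]$. Since $|[n]|=n$ and $|\min(\pi_1)|=k+i$, inclusion–exclusion yields $|\mathrm{Sing}(\pi_3)\setminus\min(\pi_1)|=n-(k+i)=n-k-i$, which is exactly iii). For ii), I use the other half of condition ii), namely $\mathrm{Sing}(\pi_1)\cup\min(\pi_3)=[n]$. Here $|\min(\pi_3)|=n-i$, so the same inclusion–exclusion gives $|\mathrm{Sing}(\pi_1)\setminus\min(\pi_3)|=n-(n-i)=i$, which is ii). Both of these are immediate once one writes $|A\cup B|=|B|+|A\setminus B|$.

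For i), I would combine the two pieces. Write $\min(\pi_1)$ as the disjoint union of $\min(\pi_1)\cap\min(\pi_3)$ and $\min(\pi_1)\setminus\min(\pi_3)$. The second set is contained in $\mathrm{Sing}(\pi_1)$: indeed, if $x\in\min(\pi_1)$ but $x\notin\min(\pi_3)$, then by condition ii) ($\mathrm{Sing}(\pi_1)\cup\min(\pi_3)=[n]$) we must have $x\in\mathrm{Sing}(\pi_1)$. Conversely any element of $\mathrm{Sing}(\pi_1)\setminus\min(\pi_3)$ certainly lies in $\min(\pi_1)\setminus\min(\pi_3)$ since $\mathrm{Sing}(\pi_1)\subseteq\min(\pi_1)$; hence $\min(\pi_1)\setminus\min(\pi_3)=\mathrm{Sing}(\pi_1)\setminus\min(\pi_3)$, whose cardinality is $i$ by part ii). Therefore $|\min(\pi_1)\cap\min(\pi_3)|=|\min(\pi_1)|-i=(k+i)-i=k$, proving i).

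The argument is entirely routine; there is no real obstacle. The only point requiring a little care is the identification $\min(\pi_1)\setminus\min(\pi_3)=\mathrm{Sing}(\pi_1)\setminus\min(\pi_3)$ used in part i), which is where condition ii) is genuinely needed rather than just cardinality counts; once that is observed, i) follows from ii) by subtraction. I would present ii) and iii) first (they are independent one-line computations) and derive i) from ii) at the end.
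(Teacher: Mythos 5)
Your proof is correct and takes essentially the same route as the paper: parts ii) and iii) are the identical inclusion--exclusion computations from the two halves of condition ii) of the definition. The only cosmetic difference is in part i), where the paper applies the sieve formula directly to $\min(\pi_1)\cup\min(\pi_3)=[n]$ (which holds since $\Sing(\pi_3)\subseteq\min(\pi_3)$), while you deduce i) from ii) by identifying $\min(\pi_1)\setminus\min(\pi_3)$ with $\Sing(\pi_1)\setminus\min(\pi_3)$; both are one-line cardinality counts.
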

\begin{proof} By definition, we have $|\min(\pi_1)|=k+i$ and $|\min (\pi_3)|=n-i$.
Since $\min(\pi_1)\cup \min (\pi_3)=[n]$, by sieve formula, we deduce
$$
|\min(\pi_1) \cap \min (\pi_3) | =|\min(\pi_1)|+ |\min (\pi_3) |-
 |\min(\pi_1)\cup \min (\pi_3)|=k,
$$
and
 $$
|\Sing(\pi_1)\setminus \min(\pi_3)|=|\Sing(\pi_1)|-|\Sing(\pi_1)\cap  \min(\pi_3)|
=n-|\min(\pi_3)|=i.
$$
In the same way, we obtain iii).
\end{proof}

\begin{theorem} \label{bij}
There is a bijection between $\mathcal{A}_{n,k}^{(i)}$  and $\mathcal{B}_{n,k}^{(i)}$.
\end{theorem}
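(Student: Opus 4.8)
The plan is to prove Theorem~\ref{bij} by exhibiting an explicit pair of mutually inverse maps $\Phi\colon\mathcal{A}_{n,k}^{(i)}\to\mathcal{B}_{n,k}^{(i)}$ and $\Psi\colon\mathcal{B}_{n,k}^{(i)}\to\mathcal{A}_{n,k}^{(i)}$. The guiding idea behind $\Phi$ is to ``unfold'' a signed $k$-partition into its positive and negative projections, which are two genuine partitions of $[n]$ sharing the same set of block-minima --- exactly as in the proof of Corollary~\ref{coroUnk} when $i=0$ --- and to record in a third partition how the entries sitting in the zero-block are attached to the ordinary blocks.

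Concretely, for $\pi=\{B_0,B_1,\dots,B_k\}\in\mathcal{A}_{n,k}^{(i)}$ I would set $m_j=\min(B_j\cap[n])$ and $M=\{m_1,\dots,m_k\}$, put $T=\{t:-t\in B_0\}$ (so $|T|=i$) and $P=\{s\geq 1:s\in B_0\}$, and for $j\in[k]$ write $B_j^{+}$ and $B_j^{-}$ for the sets of absolute values of the positive, resp.\ negative, entries of $B_j$; the defining rules give $\min B_j^{+}=\min B_j^{-}=m_j$ and $B_j^{+}\cap B_j^{-}=\{m_j\}$, and $B_1^{-},\dots,B_k^{-}$ (resp.\ $B_1^{+},\dots,B_k^{+}$) partition $[n]\setminus T$ (resp.\ $[n]\setminus P$). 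Let $\pi_1$ be the partition of $[n]$ with blocks $B_1^{-},\dots,B_k^{-}$ together with the singletons $\{t\}$ for $t\in T$; let $\pi_2$ be obtained from $B_1^{+},\dots,B_k^{+}$ by peeling each $t\in T$ off as a singleton and, for every $s\in P$, re-attaching $s$ to the block that came from the unique $B_j$ containing $-s$; and let $\pi_3$ be the partition of $[n]$ in which, for each $j$, the element $m_j$ is grouped with all $t\in T$ satisfying $t\in B_j$, every other element forming a singleton. A routine but careful check --- for which the cardinality identities of the Lemma above are the right guide --- shows $\pi_1,\pi_2\in\Pi_{n,k+i}$, $\pi_3\in\Pi_{n,n-i}$, $\min\pi_1=\min\pi_2=M\cup T$, $\min\pi_3=[n]\setminus T$, and that the required incidences among the sets of minima and the sets of singletons of the three partitions hold, so that $\Phi(\pi)\in\mathcal{B}_{n,k}^{(i)}$.

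For the inverse, start from $(\pi_1,\pi_2,\pi_3)\in\mathcal{B}_{n,k}^{(i)}$. By the Lemma, $\min\pi_1\cap\min\pi_3$ has exactly $k$ elements: these become the block-minima $m_1,\dots,m_k$, while the $i$ elements of $\min\pi_1\setminus\min\pi_3$ become the set $T$. For each $j$, the block of $\pi_1$ with minimum $m_j$ is $B_j^{-}$; the block of $\pi_2$ with minimum $m_j$ produces $B_j^{+}$ once one removes the ``extra'' elements attached from $P$ --- which are recovered as those lying in a common block of $\pi_1$ and $\pi_2$ other than $m_j$ --- and the part of $B_j^{+}$ belonging to $T$ is read off from the $\pi_3$-block of $m_j$. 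One then builds $B_j$ out of $m_j$, $-m_j$ and the signed entries prescribed by $B_j^{+}$ and $B_j^{-}$, puts $-t$ in $B_0$ for each $t\in T$, and puts $+s$ in $B_0$ for each recovered $s\in P$; the result is a signed $k$-partition of $[\pm n]_0$ whose zero-block carries exactly $i$ negative entries.

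The last step is to verify $\Psi\circ\Phi=\mathrm{id}$ and $\Phi\circ\Psi=\mathrm{id}$ by following a generic element through both constructions, and I expect this invertibility check to be the only genuine difficulty: the map $\Phi$ redistributes the ``orphan'' entries of the zero-block --- the sets $P$ and $T$ --- among the ordinary blocks, and one must be certain that from the triple $(\pi_1,\pi_2,\pi_3)$ it is always possible to recognise unambiguously which elements were orphans and to which block each was attached. This is exactly where the defining conditions of $\mathcal{B}_{n,k}^{(i)}$ (the equality of the two sets of minima, the singleton conditions, and the two union identities), together with the counting provided by the Lemma, get used; once this bookkeeping is set up correctly the remaining verifications are mechanical.
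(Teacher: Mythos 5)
Your maps $\Phi$ and $\Psi$ are, up to interchanging the roles of $\pi_1$ and $\pi_2$, exactly the ones the paper constructs, so the strategy is the intended one. But the difficulty is not where you locate it. You defer as ``a routine but careful check'' the verification that $\Phi(\pi)$ satisfies the defining conditions of $\mathcal{B}_{n,k}^{(i)}$, and in particular the condition $\Sing(\pi_1)=\Sing(\pi_2)$ of Definition~\ref{eq:def4}. This check fails. Take $n=3$, $k=2$, $i=0$ and the signed $2$-partition $\pi=\{\{0\},\{\pm1,3\},\{\pm2,-3\}\}\in\mathcal{A}_{3,2}^{(0)}$, so that $T=P=\emptyset$. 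Your recipe gives $B_1^{-}=\{1\}$, $B_2^{-}=\{2,3\}$, $B_1^{+}=\{1,3\}$, $B_2^{+}=\{2\}$, hence $\pi_1=\{\{1\},\{2,3\}\}$ and $\pi_2=\{\{1,3\},\{2\}\}$; these have the same set of minima $\{1,2\}$, but $\Sing(\pi_1)=\{1\}\neq\{2\}=\Sing(\pi_2)$. The underlying asymmetry is that a non-minimal positive entry $v$ of $B_j$ whose partner $-v$ lies in another \emph{non-zero} block contributes to $B_j^{+}$ but not to $B_j^{-}$, so the two blocks issued from $B_j$ need not become singletons simultaneously.

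Moreover this cannot be repaired by choosing the maps more cleverly: with the condition $\Sing(\pi_1)=\Sing(\pi_2)$ retained, the two sets are not even equinumerous. For $n=3$, $k=2$, $i=0$ one has $|\mathcal{A}_{3,2}^{(0)}|=a_{3,2}^{(0)}=U(3,2)=5$, whereas $\pi_3$ is forced to be the partition of $[3]$ into singletons (so condition ii) is vacuous) and the pairs $(\pi_1,\pi_2)\in\Pi_{3,2}\times\Pi_{3,2}$ with equal minima \emph{and} equal singleton sets are only the three diagonal pairs, so $|\mathcal{B}_{3,2}^{(0)}|=3$. (This defect is inherited from the paper, whose own proof asserts $\Sing(\pi_1)=\Sing(\pi_2)$ as ``clear''; the correct target set keeps $\min(\pi_1)=\min(\pi_2)$ and the union conditions but drops the equality of singleton sets, which for $i=0$ recovers exactly the pairs of Corollary~\ref{coroUnk}, as it must.) So before the invertibility check you single out as the main difficulty, you must first correct the description of the image of $\Phi$. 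Once $\Sing(\pi_1)=\Sing(\pi_2)$ is removed, your construction and your recovery of $P$ in the inverse (as the non-minimal elements lying in blocks with the same minimum in both $\pi_1$ and $\pi_2$) do go through, but the verification that the remaining conditions characterize the image still has to be written out rather than asserted.
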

\begin{proof}
 Let $\pi=\{B_0,B_1,\ldots,B_k\}$ be a signed $k$-partition in $\mathcal{A}_{n,k}^{(i)}$.
 We  construct the triple $(\pi_1,\pi_2,\pi_3)$ of partitions by the following algorithm.
\begin{enumerate}
 \item[$\pi_1, \pi_2$:] \begin{itemize}
        \item  Let $\pi'=\{B_0',B_1',\ldots,B_k'\}$ be the partition obtained by exchanging
        all  $j$  and  $-j$ in $\pi$ if $j \in B_0$ (resp. $j\in [n]$).
\item Let  $\pi''=\{B_0'',B_1'',\ldots,B_k''\}$  be the partition obtained by removing
 all the negative values in $\pi'$.
\item  Define $\pi_1$ (resp. $\pi_2$) to be the partition
obtained by splitting the $i$ positive  elements in $B_0''$  into  $i$  singletons and deleting 0 in $\pi''$.
       \end{itemize}
The resulting  partitions $\pi_1$  and $\pi_2$ are  clearly  elements of $\Pi_{n,k+i}$ and
 satisfy
 $\min(\pi_1)=\min(\pi_2)$ and $\Sing(\pi_1)=\Sing(\pi_2)$.
\item[$\pi_3$:] \begin{itemize}
       \item
For all $p \in [n]\setminus \min \pi$ such that $B_0 \cap \{\pm p \} = \emptyset$, move $p$ into the zero-block and obtain the partition $\pi'=\{B_0',B_1',\ldots,B_k'\}$.
So there are $n-k-i$ positive entries in the new $B_0'$.
\item Let  $\pi''=\{B_0'',B_1'',\ldots,B_k''\}$  be the partition obtained by removing
all the negative values in $\pi'$.
\item Define $\pi_3$  to be the partition
obtained by splitting the $n-k-i$ positive  elements in $B_0''$  into  $n-k-i$  singletons and deleting 0 in $\pi''$.
 \end{itemize}
The resulting partition $\pi_3$  is an element of $\Pi_{n,n-i}$.
\end{enumerate}
For any $p\in [n]\setminus \min(\pi_1)$,  if  $p\notin B_0$  then $B_0\cap \{\pm p\}\neq \emptyset$,
by definition $p$ will be moved in the zero-block, otherwise $p$ is already in the zero-block.
Thus, the elements that are not in $\min(\pi_1)$ become singletons in $\pi_3$. Hence
 $\min(\pi_1) \cup \Sing(\pi_3)=[n]$. Similarly we have $\Sing(\pi_1) \cup \min(\pi_3)=[n]$.

For example, for  the signed 3-partition of $[\pm 10]_0$:
\begin{align}\label{eq:example}
\pi =\{ \{ -4,6,7,-8,-10\}_0,\{\pm1,3,4,-5,-7\},\{\pm2,-3,5,-6,8\},\{\pm9,10\}\},
\end{align}
the corresponding triple is $(\pi_1,\pi_2,\pi_3) \in \Pi_{10,6} \times \Pi_{10,6} \times \Pi_{10,7}$
with : \begin{equation} \label{pi}
\begin{array}{c} \pi_1= \{ \{1,3,7\},\{2,5,6\},\{4\},\{8\},\{9\}, \{10\} \}, \\
 \pi_2= \{ \{1,5,7\},\{2,3,6\},\{4\},\{8\},\{9\}, \{10\} \}, \\
 \pi_3= \{ \{1,4\},\{2,8\},\{3\},\{5\},\{6\},\{7\}, \{9,10\} \}. \end{array}
\end{equation}

Conversely, for any $(\pi_1,\pi_2,\pi_3)\in \mathcal{B}_{n,k}^{(i)}$, we construct $\pi=\{ B_0,B_1,\ldots,B_k \}\in \mathcal{A}_{n,k}^{(i)}$ with the following procedure:
\begin{itemize}
 \item Use the $k$ elements of $\min(\pi_1) \cap \min(\pi_3)$, say $p_1,\ldots, p_k$ and 0
  to create $k+1$  blocks:
  \begin{align}\label{eq:BC}
  B_0=\{\ldots\}_0, \;   B_1=\{\pm p_1, \ldots\},\ldots, B_k=\{\pm p_k\ldots\},
  \end{align}
  where  ``$\ldots$'' means that the blocks are not completed.
   For instance, for the triple $(\pi_1,\pi_2,\pi_3)$ in \eqref{pi}, we create four blocks: $\{0, \ldots \}$, $\{ \pm 1, \ldots\}$, $\{ \pm 2, \ldots \}$ and $\{ \pm 9, \ldots \}$.
\item For each  element $x_j$ of $[n] \setminus \min(\pi_3)$ ($1\leq j \leq i$), suppose that $x_j$  appears in a non-singleton block $C_j$ of $\pi_3$.
Then put $-x_j$  into the zero-block $B_0$ and $x_j$ into the block  in \eqref{eq:BC} that contains $\min(C_j)$.
Note that we must show that   $\min(C_j) \in \min(\pi_1) \cap \min(\pi_3)$ to warrant the existence of such a block in \eqref{eq:BC}.
Indeed, if  $\min(C_j) \notin \min(\pi_1)$, then, by Definition~\ref{eq:def4},  we would have $\min(C_j)\in \Sing(\pi_3)$.
For the current example, we place the number $4$ in the block that contains $1$.
\item For each element $y_j$ of $[n] \setminus \min(\pi_2)$ ($1\leq j \leq n-k-i$), suppose that $y_j$
 appears in a non-singleton block $D_j$  (resp. $E_j$) of $\pi_2$ (resp. $\pi_1$).
 Then put $-p_j$ into the block in \eqref{eq:BC} that contains $\min(D_j)$ and
 put $p_j$ into the block in \eqref{eq:BC} that contains $\min(E_j)$ if this block dosn't  contains $-p_j$, into
the zero-block $B_0$ otherwise.
  For the current example, we place the number $-3$ in the block that contains $2$.
and  $5$ in the block that contains $2$, and $6$ in the zero-block because the block that contains $2$ already has $-6$.
\end{itemize}
\end{proof}

Since $\varphi$ described in Remark~\ref{remark3} maps
each partition to  a subdiagonal quasi-permutation,  for every triple $(\pi_1,\pi_2,\pi_2)$ of partitions satisfying the conditions of Theorem \ref{bij}, we can associate a triple
$(P_1, P_2,P_3)=(\varphi(\pi_1), \varphi(\pi_2), \varphi(\pi_3))$ of subdiagonal quasi-permutations.
  If $\overline{P_i}$ denotes the supdiagonal quasi-permutation obtained from $P_i$ exchanging the $x$ and $y$ coordonates, then $(Q_1,Q_2)=(\overline{P_1} \cup P_3, \overline{P_2} \cup P_3)$ is an ordered pair of simply hooked quasi-permutations satisfying the conditions of Theorem \ref{mainthm}.
Thus, we obtain a bijection between the signed $k$-partitions and the ordered pairs of simply hooked quasi-permutations.

For example,  for the signed $3$-partition $\pi$ in \eqref{eq:example},
the corresponding
 ordered pair of simply hooked quasi-permutations $(Q_1,Q_2)$  is then given by \eqref{eq:qq} (cf.  Figure \ref{couple}).

%%%%%%%%%%%%%%%%%%%%%%%%%%%%%%%%%%%
\section{Jacobi-Stirling numbers of the first kind $js_{n}^{k}(z)$}
% For convenience, we define $\Js_{n}^{k}(z):=(-1)^{n-k}\js_n^k(z)$, then
%\begin{equation}
% \label{eqjs'nk} \Js_{n}^{k}(z) = \Js_{n-1}^{k-1}(z)+(n-1)(n-1+z)\Js_{n-1}^{k}(z), \qquad n,k \geq 1
%\end{equation}

For  a permutation $\sigma$ of  $[n]_0:=[n]\cup \{0\}$ (resp. $[n]$) and
 for $j\in [n]_0$ (resp. $[n]$), denote by
$\Orb_\sigma(j)=\{ \sigma^\ell(j): \ell \geq 1 \}$ the orbit of $j$ and $\min(\sigma)$
  the set of  its cyclic minima, i.e.,
  $$
\min(\sigma)=\{ j \in [n]: j=\min(\Orb_\sigma(j)\cap [n]) \}.
  $$

\begin{definition}  Given a word $w=w(1)\ldots w(\ell)$ on the finite alphabet $[n]$,
a  letter $w(j)$ is a \emph{ record} of $w$ if $w(k)>w(j)$ for every $k\in \{1,\ldots, j-1\}$.
We define $\rec(w)$  to be the number of records of $w$ and $\rec_0(w)=\rec(w)-1$.
\end{definition}

For example, if $w={\bf 5}7{\bf  4}86{\bf 2}3{\bf 1}9$, then
the records are  $5, 4, 2,1$. Hence $\rec(w)=4$.

\begin{theorem} \label{thm3}
The integer $b_{n,k}^{(i)}$ is   the number of ordered pairs $(\sigma,\tau)$ such that
$\sigma$ $($resp. $\tau)$ is a permutation of $[n]_0$ $($resp. $[n])$
with $k$ cycles, satisfying
\begin{itemize}
 \item[i)] $1\in \Orb_\sigma(0)$,
\item[ii)] $\min \sigma=\min \tau$,
\item[iii)] $\rec_0(w)=i$, where $w=\sigma(0)\sigma^2(0)\ldots \sigma^l(0)$ with $\sigma^{l+1}(0)=0$.
\end{itemize}
\end{theorem}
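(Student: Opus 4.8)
The strategy is to show that the objects counted by Theorem~\ref{thm3} satisfy the recurrence \eqref{eqlnk2} for $(-1)^{n-k}\js_n^k(z)$, which forces the coefficient equality $b_{n,k}^{(i)}$. Let $\mathcal{E}_{n,k}^{(i)}$ denote the set of ordered pairs $(\sigma,\tau)$ as in the statement, and set $e_{n,k}^{(i)}=|\mathcal{E}_{n,k}^{(i)}|$. First I would pin down the initial conditions: when $n=k=1$ we must have $\sigma=(0\,1)$ and $\tau=(1)$, so $e_{1,1}^{(0)}=1$, and $e_{n,k}^{(i)}=0$ unless $1\le k\le n$ and $0\le i\le n-k$; by the degree statement of Theorem~\ref{thm1}, $b_{n,k}^{(i)}$ has the same support. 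Then I would establish that $e_{n,k}^{(i)}$ obeys
\begin{equation}\label{eq:erec}
e_{n,k}^{(i)} = e_{n-1,k-1}^{(i)} + e_{n-1,k}^{(i-1)} + (n-1)^2\, e_{n-1,k}^{(i)},
\end{equation}
from which, together with $b_{n,k}^{(i)}$ satisfying the same recurrence (read off \eqref{eqlnk2} by expanding $(n-1)(n-1+z) = (n-1)^2 + (n-1)z$ in the basis of powers of $z$), the theorem follows.

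**Proving the recurrence \eqref{eq:erec}.** I would classify a pair $(\sigma,\tau)\in\mathcal{E}_{n,k}^{(i)}$ according to the role of the largest letter $n$. The key point is that, because records of a word are the letters dominating everything to their left, and $n$ is maximal, $n$ is a record of the word $w$ only if it is the very first letter $\sigma(0)$; likewise $n$ is a cyclic minimum of a permutation only if $\{n\}$ is a fixed point. This gives three cases. \emph{Case A}: $\{n\}$ is a fixed point of both $\sigma$ and $\tau$. Removing it decreases the number of cycles by one in each, preserves $\min\sigma=\min\tau$ (both lose $n$), and does not change $w$ (since $n\notin\Orb_\sigma(0)$, as $0$'s orbit contains $1$ and hence is not $\{n\}$); this yields $e_{n-1,k-1}^{(i)}$ pairs. \emph{Case B}: $n=\sigma(0)$, i.e. $n$ is the first letter of $w$. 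Then $n$ cannot be a fixed point of $\sigma$, so $n$ is not in $\min\sigma=\min\tau$; inside $\tau$, $n$ must therefore lie in a cycle of length $\ge 2$. Deleting $n$ from the cycle of $\tau$ it lies in (there are, reading from the other pairings, $n-1$ positions it could have occupied — but here we are \emph{removing} it, so there is a unique resulting $\tau'$) and from $\sigma$'s cycle removes the first letter of $w$, decreasing $\rec_0$ by exactly one; $k$ is unchanged. This contributes $e_{n-1,k}^{(i-1)}$. \emph{Case C}: $n$ appears in $\sigma$ but not as $\sigma(0)$, and $n$ appears in $\tau$, with $\{n\}$ a fixed point of neither. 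Then $n$ is an interior letter of some cycle of $\sigma$ and of some cycle of $\tau$; deleting it from both gives a pair in $\mathcal{E}_{n-1,k}^{(i)}$ (neither $k$, nor $\min$, nor $\rec_0(w)$ changes, since $n$ was not a record and not a minimum). To invert, one must reinsert $n$ into a cycle of $\sigma$ (in $n-1$ ways: immediately after any of the $n-1$ letters of $[n-1]_0$) and into a cycle of $\tau$ (in $n-1$ ways: after any of the $n-1$ letters of $[n-1]$), and one checks these insertions never create a new record of $w$ or a new cyclic minimum and never make $\{n\}$ a fixed point — so all $(n-1)^2$ choices are legal and distinct. Summing the three cases gives \eqref{eq:erec}.

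**Main obstacle.** The delicate part is the bookkeeping in Case C — verifying that reinserting $n$ as an interior letter, in all $(n-1)^2$ ways on the two sides independently, genuinely lands back in $\mathcal{E}_{n,k}^{(i)}$ and that the map is a bijection onto the Case-C subset. One must be careful that inserting $n$ into $\sigma$'s cycle does not accidentally place it as the image of $0$ (it does not, since we insert \emph{after} a letter, and we may insert after $0$ itself only by putting $n$ at the start of $w$, which is excluded from Case C and reserved for Case B), and that condition i), $1\in\Orb_\sigma(0)$, is preserved — inserting $n$ into whichever cycle of $\sigma$ does not disturb the orbit structure of $0$'s cycle beyond adding the letter $n$ to it when $n$ is inserted there, and $1$ stays in that orbit. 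A similar but easier check handles the independence of the $\tau$-insertion. I expect the remaining cases (A and B) to be routine once the record/minimum maximality observations are in place; the only subtlety in B is confirming that $n\notin\Orb_\sigma(0)$ is impossible there (indeed $n=\sigma(0)\in\Orb_\sigma(0)$, which is exactly why B is the "record-creating" case) and that removing the leading letter of $w$ drops $\rec_0$ by one and not more, which is immediate since all other records of $w$ lie among $[n-1]$ and retain their record status.
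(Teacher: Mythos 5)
Your overall strategy is the paper's: classify pairs according to the role of the largest letter $n$ (equivalently, according to whether $\sigma^{-1}(n)$ is $n$, $0$, or neither), derive a three-term recurrence for $e_{n,k}^{(i)}$, and match it against the recurrence for $b_{n,k}^{(i)}$ obtained from \eqref{eqlnk2}. But there is a genuine error in your Case B, and it propagates into a recurrence that is false. Expanding $(n-1)(n-1+z)=(n-1)^2+(n-1)z$ in \eqref{eqlnk2}, exactly as you indicate, gives
\[
b_{n,k}^{(i)} \;=\; b_{n-1,k-1}^{(i)} \;+\; (n-1)\, b_{n-1,k}^{(i-1)} \;+\; (n-1)^2\, b_{n-1,k}^{(i)},
\]
with a factor $(n-1)$ on the middle term; the recurrence you propose for $e_{n,k}^{(i)}$ omits that factor, so it does not match the recurrence you need, and the two halves of your argument are inconsistent with each other.

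The source of the discrepancy is the count in Case B ($n=\sigma(0)$). There the deletion of $n$ from $\sigma$ is indeed forced, but the deletion of $n$ from $\tau$ forgets where $n$ sat inside its (non-singleton) cycle of $\tau$. The map from the Case-B pairs onto $\mathcal{E}_{n-1,k}^{(i-1)}$ is therefore $(n-1)$-to-one, not a bijection: to reconstruct $(\sigma,\tau)$ from $(\sigma',\tau')$ you must additionally choose after which of the $n-1$ letters of $[n-1]$ to reinsert $n$ in $\tau'$. You half-noticed this (``there are $n-1$ positions it could have occupied'') but then discarded the factor on the grounds that removal is deterministic; that reasoning would equally destroy the $(n-1)^2$ in your Case C, which you correctly kept. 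Once the middle term is corrected to $(n-1)\,e_{n-1,k}^{(i-1)}$, your case analysis coincides with the paper's proof: the observations that the maximal letter $n$ is a record of $w$ only as its first letter and a cyclic minimum only as a fixed point, and the treatments of Cases A and C, are all sound, and the theorem then follows by induction as you intend.
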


\begin{proof}
Let $\mathcal{E}_{n,k}^{(i)}$ be the set of ordered pairs
$(\sigma,\tau)$  satisfying the conditions of Theorem \ref{thm3} and $e_{n,k}^{(i)}=\left|\mathcal{E}_{n,k}^{(i)}\right|$.
 We divide $\mathcal{E}_{n,k}^{(i)}$ into three parts:
\begin{itemize}
\item[(i)] the ordered pairs $(\sigma,\tau)$ such that $\sigma^{-1}(n)=n$.
Then $n$ forms a cycle in both $\sigma$ and $\tau$ and there are clearly $e_{n-1,k-1}^{(i)}$ possibilities.
\item[(ii)]  the ordered pairs $(\sigma,\tau)$ such that $\sigma^{-1}(n)=0$.
We can construct such ordered pairs by first choosing an ordered pair $(\sigma',\tau')$ in $\mathcal{E}_{n-1,k}^{(i-1)}$ and
then inserting $n$ in $\sigma'$ as image of $0$ (resp. in $\tau'$).
Clearly, there are $(n-1)e_{n-1,k}^{(i-1)}$ possibilities.
\item[(iii)] the ordered pairs $(\sigma,\tau)$ such that $\sigma^{-1}(n)\not\in \{0,n\}$.
We can construct such ordered pairs by first choosing an ordered pair $(\sigma',\tau')$ in $\mathcal{E}_{n-1,k}^{(i)}$ and
then inserting $n$ in $\sigma'$ (resp. in $\tau'$).
Clearly, there are $(n-1)^2e_{n-1,k}^{(i)}$ possibilities.
\end{itemize}
Summing up,  we get the following equation:
  \begin{equation} \label{eqgnk}
  e_{n,k}^{(i)} = e_{n-1,k-1}^{(i)} + (n-1) e_{n-1,k}^{(i-1)} + (n-1)^2 e_{n-1,k}^{(i)}.
  \end{equation}
  By \eqref{eqlnk2},  it is easy to see that  the coefficients $b_{n,k}^{(i)}$ satisfy the same recurrence.
   \end{proof}

We show now how to derive from Theorems~\ref{thm1}  and \ref{thm3} the combinatorial interpretations for the numbers $|\ls(n,k)|$, $|s(n,k)|$ and $|u(n,k)|$.

\begin {corollary}
The integer $|\ls(n,k)|$ is
the number of ordered pairs $(\sigma,\tau)$ such that
$\sigma$ $($resp. $\tau)$ is a permutation of $[n]_0$ $($resp. $[n])$
with $k$ cycles, satisfying
$1\in \Orb_\sigma(0)$
and  $\min \sigma=\min \tau$.
\end {corollary}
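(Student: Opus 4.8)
The plan is to obtain this corollary as the special case $z=1$ of Theorem~\ref{thm3}, exactly parallel to how Corollary~\ref{coroPSnk} followed from Theorem~\ref{mainthm}. Recall from \eqref{defLS} that $\ls(n,k)=\js_n^k(1)$, so by \eqref{eq:defb} we have
\begin{equation*}
(-1)^{n-k}\ls(n,k)=(-1)^{n-k}\js_n^k(1)=\sum_{i=0}^{n-k}b_{n,k}^{(i)}.
\end{equation*}
Since each $b_{n,k}^{(i)}$ is a nonnegative integer by Theorem~\ref{thm1}, this shows $|\ls(n,k)|=\sum_{i=0}^{n-k}b_{n,k}^{(i)}$.

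The second step is to reinterpret this sum combinatorially via Theorem~\ref{thm3}. For each $i$ with $0\le i\le n-k$, the integer $b_{n,k}^{(i)}$ counts the ordered pairs $(\sigma,\tau)$ in $\mathcal{E}_{n,k}^{(i)}$, i.e.\ pairs where $\sigma$ is a permutation of $[n]_0$ with $k$ cycles, $\tau$ is a permutation of $[n]$ with $k$ cycles, $1\in\Orb_\sigma(0)$, $\min\sigma=\min\tau$, and $\rec_0(w)=i$ where $w$ is the word $\sigma(0)\sigma^2(0)\ldots\sigma^l(0)$ reading off the $0$-orbit. Summing over $i$ simply drops the last condition, since every such pair $(\sigma,\tau)$ satisfies $\rec_0(w)=i$ for exactly one value $i\in\{0,1,\dots,n-k\}$ (the length of $w$ is at most $n$ and it always has at least one record, so $\rec_0(w)\ge 0$; and one checks that the remaining $n-k$ non-cyclic-minimum elements bound $\rec_0(w)\le n-k$, which also follows a posteriori from Theorem~\ref{thm1}). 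Therefore $\sum_{i=0}^{n-k}b_{n,k}^{(i)}$ counts exactly the set of ordered pairs $(\sigma,\tau)$ satisfying conditions i) and ii) of Theorem~\ref{thm3} alone.

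Combining the two steps yields the statement. There is essentially no obstacle here: the only point requiring a word of care is that the union $\bigcup_i \mathcal{E}_{n,k}^{(i)}$ is genuinely a disjoint union and that it exhausts all pairs satisfying i) and ii), which is immediate because $\rec_0(w)$ is a well-defined statistic taking a single value on each pair. So the proof is a one-line specialization: by \eqref{defLS}, $|\ls(n,k)|=(-1)^{n-k}\js_n^k(1)=\sum_{i=0}^{n-k}b_{n,k}^{(i)}$, and by Theorem~\ref{thm3} this sum enumerates the ordered pairs $(\sigma,\tau)$ in the statement.

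\begin{proof}
By \eqref{defLS} and Theorem~\ref{thm1}, we have
\begin{equation*}
|\ls(n,k)|=(-1)^{n-k}\js_n^k(1)=\sum_{i=0}^{n-k}b_{n,k}^{(i)}.
\end{equation*}
By Theorem~\ref{thm3}, $b_{n,k}^{(i)}$ counts the ordered pairs $(\sigma,\tau)$, with $\sigma$ a permutation of $[n]_0$ and $\tau$ a permutation of $[n]$, each with $k$ cycles, such that $1\in\Orb_\sigma(0)$, $\min\sigma=\min\tau$, and $\rec_0(w)=i$, where $w=\sigma(0)\sigma^2(0)\ldots\sigma^l(0)$ with $\sigma^{l+1}(0)=0$. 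For any such pair the statistic $\rec_0(w)$ takes exactly one value in $\{0,1,\dots,n-k\}$, so the sets $\mathcal{E}_{n,k}^{(i)}$ for $0\le i\le n-k$ partition the collection of all ordered pairs $(\sigma,\tau)$ satisfying conditions i) and ii) of Theorem~\ref{thm3}. Hence $\sum_{i=0}^{n-k}b_{n,k}^{(i)}$ equals the number of such pairs, which is the asserted count.
\end{proof}
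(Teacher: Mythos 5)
Your proof is correct and is exactly the derivation the paper intends: the paper gives no separate argument for this corollary beyond noting that it follows from Theorem~\ref{thm1} and Theorem~\ref{thm3}, i.e.\ from $|\ls(n,k)|=(-1)^{n-k}\js_n^k(1)=\sum_{i=0}^{n-k}b_{n,k}^{(i)}$ and the fact that summing over $i$ drops the record condition. This mirrors how the paper obtains the $\LS(n,k)$ interpretation from Theorem~\ref{thm2}, so there is nothing to add.
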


\begin{corollary} The integer  $|s(n,k)|$ is the number of  permutations of $[n]$ with
 $k$ cycles.
  \end{corollary}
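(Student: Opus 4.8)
The plan is to deduce the statement directly from Theorem~\ref{thm3} by specializing the parameters so that the ordered pair $(\sigma,\tau)$ degenerates to a single permutation $\tau$ of $[n]$. First I would observe that $|s(n,k)| = b_{n,k}^{(n-k)}$ by Theorem~\ref{thm1} (the leading coefficient of $(-1)^{n-k}\js_n^k(z)$), so that Theorem~\ref{thm3} describes $|s(n,k)|$ as the number of ordered pairs $(\sigma,\tau)$ with $\sigma\in\mathcal{S}_{[n]_0}$, $\tau\in\mathcal{S}_{[n]}$, both having $k$ cycles, subject to $1\in\Orb_\sigma(0)$, $\min\sigma=\min\tau$, and $\rec_0(w)=n-k$ where $w=\sigma(0)\sigma^2(0)\cdots\sigma^l(0)$ is the word tracing the orbit of $0$ in $\sigma$.

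The key step is to analyze the extremal condition $\rec_0(w)=n-k$, i.e.\ $\rec(w)=n-k+1$. I would argue that the word $w$ has length $l$ equal to the size of $\Orb_\sigma(0)$ minus $1$ (the entries strictly between the two occurrences of $0$), and that the number of records of any word of length $l$ on distinct letters is at most $l$, with equality exactly when $w$ is strictly decreasing. Since $\tau$ has $k$ cycles covering $[n]$, the $k$ cyclic minima in $\min\tau=\min\sigma$ force at least $k$ of the $n$ elements to lie outside the orbit of $0$ in a way that limits $l$; more precisely, I would show that the constraint $\rec_0(w)=n-k$ can only be met when every element of $[n]$ outside $\{$the cyclic minima$\}$ has been absorbed into the $0$-orbit of $\sigma$ as a record, which pins down $\sigma$ completely from $\tau$: the orbit of $0$ must consist of $0$ followed by all non-minima in strictly decreasing order, and the remaining cycles of $\sigma$ must be the singletons on $\min\tau$. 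Thus $\sigma$ is uniquely determined by $\tau$, and conversely every $\tau\in\mathcal{S}_{[n]}$ with $k$ cycles yields a valid $\sigma$; this gives a bijection between $\mathcal{E}_{n,k}^{(n-k)}$ and the set of permutations of $[n]$ with $k$ cycles, proving the corollary. (Alternatively, and more cleanly, one may bypass Theorem~\ref{thm3} entirely: set $z=0$ in \eqref{eqlnk2}, which yields $\js_n^k(0)=\js_{n-1}^{k-1}(0)-(n-1)\js_{n-1}^k(0)$, exactly the recurrence \eqref{eqSt1} for $s(n,k)$ with the same initial conditions, so $\js_n^k(0)=s(n,k)$; since $b_{n,k}^{(n-k)}$ is the top coefficient and $\deg\js_n^k=n-k$, one needs instead the constant-term identity, which does not directly give $s(n,k)$, so the recurrence-at-$z=0$ route actually proves $\js_n^k(0)=s(n,k)$ and one must still connect this to $b_{n,k}^{(n-k)}$ — hence the combinatorial route via Theorem~\ref{thm3} is the intended one.)

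The main obstacle will be the careful verification of the claim that $\rec_0(w)=n-k$ forces $\sigma$ to be determined by $\tau$. One must rule out configurations where some cyclic minimum of $\tau$ happens to lie in a non-singleton cycle of $\sigma$ or in the orbit of $0$, and check that the record count cannot reach $n-k$ in any such case; this requires bounding $\rec(w)$ by the number of elements of $[n]$ that are \emph{not} forced to be cyclic minima, together with the observation that each cyclic minimum contributes at least one element that either heads its own cycle in $\sigma$ or breaks the decreasing pattern of $w$. Once this counting inequality is established and the equality case identified, the bijection and hence the corollary follow immediately, and one simultaneously recovers the classical fact that $|s(n,k)|$ counts permutations of $[n]$ with $k$ cycles.
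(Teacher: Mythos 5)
Your overall route is exactly the paper's: identify $|s(n,k)|$ with $b_{n,k}^{(n-k)}=\left|\mathcal{E}_{n,k}^{(n-k)}\right|$ via Theorems~\ref{thm1} and~\ref{thm3}, then show the extremal record condition forces $\sigma$ to be completely determined by $\tau$. However, your description of the extremal configuration has an off-by-one error that would make the verification fail as literally stated. Since $1\in\Orb_\sigma(0)$ and $1$ is the minimum of $[n]$, the element $1$ is itself one of the $k$ cyclic minima of $\sigma$ and it lies \emph{inside} the $0$-orbit; hence only $k-1$ (not $k$) elements of $\min\tau$ are forced outside the $0$-orbit, the $0$-orbit has cardinality $n-k+2$ (namely $0$, $1$, and the $n-k$ non-minima), and the word $w$ is the $n-k$ elements of $[n]\setminus\min\tau$ in decreasing order \emph{followed by} $1$, giving $\rec(w)=n-k+1$ and $\rec_0(w)=n-k$. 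Your stated structure --- $0$-orbit equal to $\{0\}\cup([n]\setminus\min\tau)$ plus singletons on all of $\min\tau$ --- yields $k+1$ cycles and violates condition i). The correct bound is: the $0$-orbit is one cycle and the other $k-1$ cycles each need at least one element, so $l\le n-(k-1)=n-k+1$; combined with $\rec(w)\le l$ and the requirement $\rec(w)=n-k+1$, equality forces $l=n-k+1$, $w$ strictly decreasing (ending in $1$), and the remaining $k-1$ cycles to be the singletons on $\min\tau\setminus\{1\}$. With that correction the bijection with permutations $\tau$ of $[n]$ having $k$ cycles goes through as you intend.

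A secondary slip, in your abandoned aside: setting $z=0$ in \eqref{eqlnk2} gives the coefficient $-(n-1)(n-1+0)=-(n-1)^2$, i.e.\ the recurrence \eqref{equnk} for the central factorial numbers, so $\js_n^k(0)=u(n,k)$ (consistent with $b_{n,k}^{(0)}=|u(n,k)|$ in Theorem~\ref{thm1}), not the Stirling recurrence \eqref{eqSt1}. You correctly conclude that this route does not reach the leading coefficient, so nothing is lost, but the intermediate identity as written is false.
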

\begin{proof} By Theorem~\ref{thm3}, the integer  $|s(n,k)|$ is the number of ordered pairs $(\sigma,\tau)$
in  $\mathcal{E}_{n,k}^{(n-k)}$. Since $\sigma$ and $\tau$ both have $k$ cycles with same cyclic minima, the permutation $\sigma$ is completely determinated by $\tau$ because $\Orb_\sigma(1)$ is the only non singleton cycle,
of cardinality $n-k+2$, so the $n-k$ elements different from 0 and 1 are exactly the elements of $[n]\setminus \min \tau$ arranged in decreasing order in the word
$w=\sigma(0)\sigma^2(0)\ldots 1$ with $\sigma(1)=0$. \end{proof}

The following result is the analogue interpretation to Corollary \ref{coroUnk} for the central factorial numbers of the first kind. This analogy is comparable with that of Stirling numbers of the first kind $|s(n,k)|$ versus the Stirling numbers of the second kind $|S(n,k)|$.

\begin{corollary} \label{thm53} The integer $|u(n,k)|$ is the number
 of ordered pairs $(\sigma,\tau) \in \mathcal{S}_n^2$  with $k$ cycles, such that  $\min(\sigma) = \min(\tau)$.
%\item[ii)] $\sigma_1(n)=n \Leftrightarrow \sigma_2(n)=n$,
\end{corollary}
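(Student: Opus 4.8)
The plan is to identify $|u(n,k)|$ with the constant term $b_{n,k}^{(0)}$ via Theorem~\ref{thm1}, and then to simplify the model of Theorem~\ref{thm3} in the special case $i=0$ by eliminating the auxiliary letter $0$.

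First, by Theorem~\ref{thm1} we have $|u(n,k)|=b_{n,k}^{(0)}$, so by Theorem~\ref{thm3} it suffices to count the ordered pairs $(\sigma,\tau)\in\mathcal{E}_{n,k}^{(0)}$, that is, the pairs for which the word $w=\sigma(0)\sigma^2(0)\cdots\sigma^l(0)$ satisfies $\rec_0(w)=0$, equivalently $\rec(w)=1$. The next step is the observation that, since condition (i) forces $1\in\Orb_\sigma(0)$ and $1$ is the least element of $[n]$, the condition $\rec(w)=1$ is equivalent to $\sigma(0)=1$: if $\sigma(0)\neq 1$ then $1$ occurs at some position $j\geq 2$ of $w$, and since every earlier letter exceeds $1$, the letter $w(j)=1$ is a second record, contradicting $\rec(w)=1$; conversely, if $\sigma(0)=1$ then the first letter is the unique record (and condition (i) holds automatically). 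Hence $\mathcal{E}_{n,k}^{(0)}$ is precisely the set of pairs $(\sigma,\tau)$ where $\sigma$ is a permutation of $[n]_0$ with $k$ cycles and $\sigma(0)=1$, $\tau$ is a permutation of $[n]$ with $k$ cycles, and $\min(\sigma)=\min(\tau)$.

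The last step is to delete the letter $0$. Given such a $\sigma$, the cycle through $0$ has the form $(0,1,a_2,\dots,a_l)$; removing $0$ from it (so that $(1,a_2,\dots,a_l)$ replaces $(0,1,a_2,\dots,a_l)$, with all other cycles unchanged) produces a permutation $\sigma'$ of $[n]$. This map is a bijection onto $\mathcal{S}_n$ — its inverse inserts $0$ as the immediate predecessor of $1$ in the cycle containing $1$ — it preserves the number of cycles, and it preserves the set of cyclic minima, because the cyclic minimum of the modified cycle is $1$ both before and after the operation, $1$ being the smallest label present. Applying this map to the first coordinate while leaving $\tau$ fixed yields a bijection between $\mathcal{E}_{n,k}^{(0)}$ and the set of ordered pairs $(\sigma',\tau)\in\mathcal{S}_n^2$, both with $k$ cycles, such that $\min(\sigma')=\min(\tau)$. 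Chaining this with $|u(n,k)|=b_{n,k}^{(0)}=|\mathcal{E}_{n,k}^{(0)}|$ gives the statement.

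The only slightly delicate points are the record computation in the middle step, where one must use that $1$ is globally minimal in $[n]$ to pin down $\sigma(0)$, and the verification that deleting $0$ leaves the cyclic minima unchanged; both are short once the setup is in place, so I do not expect a genuine obstacle beyond this bookkeeping.
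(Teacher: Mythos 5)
Your proposal is correct and follows essentially the same route as the paper's (terse) argument: the condition $\rec_0(w)=0$ forces $\sigma^{-1}(1)=0$, i.e.\ $\sigma(0)=1$, and one then deletes the letter $0$ from $\sigma$. Your write-up simply supplies the details (the record computation and the verification that deleting $0$ preserves the cycle count and the cyclic minima) that the paper leaves implicit.
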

Indeed,  the integer $|u(n,k)|$ is the number of ordered pairs $(\sigma,\tau)$ in
 $\mathcal{E}_{n,k}^{(0)}$. Theorem~\ref{thm3} implies that $\sigma^{-1}(1)=0$.
The result follows then by deleting the zero in $\sigma$.

\begin{remark} By the substitution $i \rightarrow n+1-i$, we can derive that the number $|u(n,k)|$
is also the number of ordered pairs $(\sigma,\tau)$ in $\mathcal{S}_n^2$ with $k$ cycles,
such that  $\max(\sigma) = \max(\tau)$,
 where $\max(\sigma)$ is the set of cyclic maxima of $\sigma$, i.e.,
 $$ \max(\sigma) = \{ j \in [n]:  j=\max(\Orb_\sigma(j)  \}.$$ \end{remark}

%\todo{More details}

\section{Further results}
\subsection{Central factorial numbers of odd indices}

For all $n,k \geq 0$, set
$$
V(n,k)=4^{n-k} T(2n+1,2k+1),\quad v(n,k)=4^{n-k} t(2n+1, 2k+1).
$$
 Note that these numbers are also integers (see Table \ref{U2n12k1}).  By definition, we have the following recurrence relations~:
\begin{align}
V(n,k) &= V(n-1,k-1) + (2k+1)^2 V(n-1,k),\label{eqVnk}\\
 v(n,k) &= v(n-1,k-1) - (2n-1)^2 v(n-1,k). \label{eqvnk}
 \end{align}

%%%%%%%%%%%%%%%%%%%%%%%
\begin{table}
\caption{The first values of $V(n,k)$ and $|v(n,k)|$}
\label{U2n12k1}
{\scriptsize
 \[ \begin{tabular}{c|cccccc}
$k\backslash n$ & $0$ & $1$ & $2$ & $3$ & $4$ & $5$
\\
\hline
$0$ & $1$ & $1$& $1$  & $1$  &$1$    &$1$ \\
$1$ &     & $1$& $10 $&$91$  &$820$  &$7381$   \\
$2$ &     &    & $1$ &$35$&$966$ &$24970$   \\
$3$ & &&             &$1$ &  $84$ &$5082$                \\
$4$ & &&&&$1$  &$165$        \\
$5$ & & &&&&$1$ \\
 \end{tabular}
\qquad
  \begin{tabular}{c|cccccc}
$k\backslash n$ & $0$ & $1$ & $2$ & $3$ & $4$ & $5$
\\
\hline
$0$ & $1$ & $1$& $9$  & $225$  &$11025$    &$893025$ \\
$1$ &     & $1$& $10 $&$259$  &$12916$  &$1057221$   \\
$2$ &     &    & $1$ &$35$&$1974$ &$172810$   \\
$3$ & &&             &$1$ &  $84$ &$8778$                \\
$4$ & &&&&$1$  &$165$        \\
$5$ & & &&&&$1$ \\
 \end{tabular} \] }
  \end{table}
The natural question is to find a combinatorial interpretation for these numbers.
We can easily find it from combinatorial theory of generating functions.

\begin{theorem} \label{thm52} The integer $V(n,k)$ is the number of partitions of $[2n+1]$ into  $2k+1$ blocks of odd cardinality.
\end{theorem}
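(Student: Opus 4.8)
The plan is to prove Theorem~\ref{thm52} by showing that the number of partitions of $[2n+1]$ into $2k+1$ blocks of odd cardinality, call it $W(n,k)$, satisfies the same recurrence~\eqref{eqVnk} and the same boundary conditions as $V(n,k)$. The boundary conditions are immediate: $W(0,0)=1$ (the single partition $\{\{1\}\}$), $W(n,n)=1$ (all blocks singletons), and $W(n,k)=0$ unless $0\le k\le n$, since $2k+1$ odd blocks cover at least $2k+1$ points and, each having size $\ge 1$ with odd sizes, the minimum total over-and-above $2k+1$ forced when $k<n$ still leaves room — in any case the support bounds match those of $V$. So the whole content is the recurrence $W(n,k)=W(n-1,k-1)+(2k+1)^2W(n-1,k)$.

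To establish it, I would look at the block containing the largest element $2n+1$ in a partition counted by $W(n,k)$, and more precisely remove a suitable pair of elements to drop from $[2n+1]$ to $[2n-1]$. The term $W(n-1,k-1)$ should correspond to the case where $\{2n+1\}$ is a singleton block: deleting it leaves a partition of a $2n$-element set into $2k$ odd blocks — but that is not quite of the right shape, so instead I would track two points. The cleaner approach: consider the element $2n$ and the element $2n+1$. If $\{2n+1\}$ is a singleton, then since all block sizes are odd and we have removed one point, the remaining $2n$ points split into $2k$ odd blocks, one of which contains $2n$; I pair $2n+1$ conceptually with the partner that makes this work. The standard device here is: the first term counts partitions in which $2n+1$ and $2n$ lie in a block of size — rather, I would split on whether removing $\{2n,2n+1\}$ from their block(s) leaves the block structure intact. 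Concretely: given a partition of $[2n-1]$ into $2k+1$ odd blocks (contributing to $W(n-1,k)$), to build a partition of $[2n+1]$ into $2k+1$ odd blocks I must insert $2n$ and $2n+1$ so that parities are preserved; this forces $2n$ and $2n+1$ to go into the \emph{same} block, and the number of choices of which of the $2k+1$ blocks — no, this gives only $2k+1$, not $(2k+1)^2$. So the correct bookkeeping is to insert $2n$ and $2n+1$ into blocks \emph{independently} and then correct parity: insert $2n$ into any of the $2k+1$ blocks and $2n+1$ into any of the $2k+1$ blocks; if they land in the same block that block gains $2$ (stays odd), if they land in different blocks both become even, which is illegal — so that count is wrong too.

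The right mechanism, and the step I expect to be the main obstacle, is to use the exponential generating function rather than a direct bijection, exactly as the theorem's preamble hints (``from combinatorial theory of generating functions''). The EGF for a single nonempty odd block is $\sinh x = \sum_{j\ge 0} x^{2j+1}/(2j+1)!$, so by the exponential formula the EGF for partitions into exactly $2k+1$ odd blocks is $(\sinh x)^{2k+1}/(2k+1)!$, and thus $W(n,k) = (2n+1)!\,[x^{2n+1}]\,(\sinh x)^{2k+1}/(2k+1)!$. I would then verify that this quantity satisfies~\eqref{eqVnk}. Writing $f_k(x) = (\sinh x)^{2k+1}$, one has the ODE-type identity $f_k'' = (2k+1)^2 f_k + (2k+1)(2k)\,(\sinh x)^{2k-1}$ (differentiating twice and using $\cosh^2 = 1+\sinh^2$), equivalently $f_k'' = (2k+1)^2 f_k + (2k+1)(2k)\, f_{k-1}$. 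Extracting $[x^{2n-1}]$, multiplying by $(2n-1)!$, and matching with $[x^{2n+1}]$ of $f_k$ via $[x^{2n+1}] f_k = [x^{2n-1}] f_k'' /((2n)(2n+1))$, I would get a recurrence relating $W(n,k)$, $W(n-1,k)$ and $W(n-1,k-1)$; after dividing by $(2k+1)!$ and simplifying the factorials $(2k+1)(2k)/(2k+1)! = 1/(2k-1)!$, this should collapse exactly to $W(n,k) = W(n-1,k-1) + (2k+1)^2 W(n-1,k)$, which is~\eqref{eqVnk}. Since $W$ and $V$ agree on the boundary and satisfy the same recurrence, $W(n,k)=V(n,k)$ for all $n,k$, proving the theorem. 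The delicate part is getting the factorial normalizations and the shift between $[x^{2n+1}]$ and $[x^{2n-1}]$ exactly right so that the spurious cross-terms cancel; everything else is routine.
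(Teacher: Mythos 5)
Your final argument is correct and takes essentially the same route as the paper: both rest on the exponential formula identifying $(\sinh x)^{2k+1}/(2k+1)!$ as the EGF for partitions of an odd-sized set into $2k+1$ odd blocks (the paper then cites Riordan's generating function $\sinh(t\sinh x)$ for $V(n,k)$, while you close the loop by deriving the recurrence \eqref{eqVnk} from $f_k''=(2k+1)^2f_k+(2k+1)(2k)f_{k-1}$ for $f_k=\sinh^{2k+1}$, which checks out). The abandoned bijective attempts in your first paragraphs are correctly discarded and the factorial bookkeeping in the last step does collapse as you predict.
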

\begin{proof} This follows from the  known generating function (see \cite[p. 214]{Riordan}):
$$\sum\limits_{n,k \geq 0} V(n,k) t^k \frac{x^n}{n!} = {\sinh( t \sinh(x))},
$$ and the classical combinatorial theory of generating functions (see \cite[Chp. 3]{FS72} and \cite[Chp. 5]{Stanley}). \end{proof}

To interpret the integer $|v(n,k)|$,  we need to introduce  the following definition.
 \begin{definition}
A \emph{$(n,k)$-Riordan
 complex} is a $(2k+1)$-tuple
 $$
 ((B_1, \sigma_1, \tau_1), \ldots, (B_{2k+1}, \sigma_{2k+1}, \tau_{2k+1}))
 $$ such that
 \begin{itemize}
 \item[i)] $\{B_1, \ldots, B_{2k+1}\}$ is a partition of $[2n+1]$ into blocks $B_i$ of odd cardinality;
  \item[ii)] $\sigma_i$
  and $\tau_i$ ($1\leq i\leq 2k+1$) are  fixed point free involutions
  on $B_i \backslash \max(B_i)$.
  \end{itemize}
 \end{definition}

\begin{theorem} \label{54}
 The integer $|v(n,k)|$ is the number of  $(n,k)$-Riordan
 complexes.
\end{theorem}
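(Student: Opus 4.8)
The plan is to mirror the strategy used for Theorem~\ref{thm52}, namely to produce an exponential generating function for the $(n,k)$-Riordan complexes, match it against the known generating function for $v(n,k)$, and invoke the standard species/generating-function dictionary. First I would recall from Riordan \cite[p. 214]{Riordan} the generating function
$$
\sum_{n,k\geq 0} v(n,k)\,t^k\frac{x^n}{n!} = \sinh\!\bigl(\tfrac{1}{t}\,\mathrm{arcsinh}(tx)\bigr),
$$
or the equivalent form for $|v(n,k)|$ obtained by the sign change $x\mapsto ix$, $t\mapsto it$; in any case the recurrence \eqref{eqvnk} together with $v(0,0)=1$ pins down the numbers, so it suffices to check that the combinatorial objects satisfy the same recurrence, or equivalently that their generating function is the correct one.

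The key computation is the generating function for a single ``enriched block'', that is, a triple $(B,\sigma,\tau)$ where $B\subset[2n+1]$ has odd cardinality $2m+1$ and $\sigma,\tau$ are fixed-point-free involutions on the $2m$-element set $B\setminus\max(B)$. The number of fixed-point-free involutions on a $2m$-set is $(2m-1)!! = (2m)!/(2^m m!)$, so the number of such triples on a fixed $(2m+1)$-set is $\bigl((2m-1)!!\bigr)^2$. Hence the EGF (in the variable $x$, weighting $[2n+1]$-labels) of a single enriched block is
$$
g(x) = \sum_{m\geq 0} \bigl((2m-1)!!\bigr)^2 \frac{x^{2m+1}}{(2m+1)!}.
$$
An $(n,k)$-Riordan complex is then an ordered $(2k+1)$-tuple of such blocks whose label sets partition $[2n+1]$; but since the defining data already orders the blocks, the EGF for the whole structure is simply $g(x)^{2k+1}$, and summing over $k$ with the marker $t$,
$$
\sum_{n,k\geq 0} |v(n,k)|\, t^{k}\frac{x^{n}}{n!}
\;\overset{?}{=}\; \sum_{k\geq 0} t^{k} g(x)^{2k+1}.
$$
So the whole proof reduces to the identity $g(x) = \mathrm{arcsinh}(x)$ (up to the appropriate substitution), i.e. to checking that $\sum_{m\geq 0}\bigl((2m-1)!!\bigr)^2 x^{2m+1}/(2m+1)! = \mathrm{arcsinh}(x)$. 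This is a classical power-series identity: $\mathrm{arcsinh}(x)=\sum_{m\geq0}(-1)^m\binom{2m}{m}x^{2m+1}/(4^m(2m+1))$, and one verifies $(-1)^m\binom{2m}{m}/(4^m(2m+1)) = \pm\bigl((2m-1)!!\bigr)^2/(2m+1)!$ after the sign-fixing substitution, since $\binom{2m}{m}/4^m = (2m-1)!!/(2m)!!$ and $(2m)! = (2m-1)!!\,(2m)!!$. Once $g(x)$ is identified with $\mathrm{arcsinh}(x)$, the right-hand side is $g/(1-tg^2) = \mathrm{arcsinh}(x)/(1-t\,\mathrm{arcsinh}^2(x))$, which after the substitution $x\to tx$, $t\to t^{-2}$ (the same one that turns $\sinh(t\sinh x)$-type identities into the Riordan form) matches Riordan's generating function for the $t(2n+1,2k+1)$, and the factor $4^{n-k}$ is exactly absorbed by that substitution. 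Finally, the standard exponential-formula/species argument (\cite[Chp. 3]{FS72}, \cite[Chp. 5]{Stanley}) justifies passing from the combinatorial description to the EGF, completing the proof.

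The main obstacle I anticipate is purely bookkeeping: getting the substitution between the ``$(x,t)$-variables'' of the combinatorial EGF and the ``$(x,t)$-variables'' in which Riordan records the central factorial numbers exactly right, including the placement of the $4^{n-k}$ normalization and the signs relating $v(n,k)$ to $|v(n,k)|$. A clean alternative that sidesteps the generating-function matching altogether is to verify the recurrence \eqref{eqvnk} directly on the objects: classify an $(n,k)$-Riordan complex on $[2n+1]$ according to the block $B_j$ containing $2n+1$ and the role of $2n+1$, which is necessarily $\max(B_j)$ (so $2n+1$ is not acted on by $\sigma_j,\tau_j$), then extract the two elements $\sigma_j(2n)$... — more precisely, one removes $2n+1$ and $2n$ together with a matched pair from each involution; a careful count of the choices involved in deleting a ``last triple'' from the distinguished block should reproduce the $(2n-1)^2$ coefficient and the $\pm$ sign, while the case where $\{2n+1\}$ is a singleton block gives the $v(n-1,k-1)$ term. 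Either route is routine once set up; I would present the generating-function proof for brevity, exactly parallel to Theorem~\ref{thm52}.
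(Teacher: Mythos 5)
Your overall strategy (single-block EGF via $((2m-1)!!)^2$, then the exponential-formula dictionary, then matching against Riordan's generating function) is the same as the paper's, but there is a genuine error at the central step: you assert that because a Riordan complex is written as a $(2k+1)$-tuple, its EGF is $g(x)^{2k+1}$ and hence the two-variable generating function is $\sum_k t^k g(x)^{2k+1}=g/(1-tg^2)$. That function is not $\sinh(t\,g(x))$, and no substitution will reconcile them: $g/(1-tg^2)$ is missing the factor $1/(2k+1)!$ that $\sinh(tg)=\sum_k t^{2k+1}g^{2k+1}/(2k+1)!$ carries. The numbers $|v(n,k)|$ count the complexes as \emph{unordered} collections of enriched blocks (equivalently, set partitions into odd blocks each decorated with two fixed-point-free involutions); the paper's own example confirms this, since $|v(2,1)|=10$ equals the number of partitions of $[5]$ of type $(3,1,1)$ each decorated in exactly one way, whereas your ordered count would give $10\cdot 3!=60$. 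The correct chain is $\exp(t\,g(x))=\sum_m t^m g(x)^m/m!$, extraction of the odd part via $\sinh$, and the identification $g(x)=\arcsin(x)$ (not $\mathrm{arcsinh}$; the coefficients $((2m-1)!!)^2/(2m+1)!=\binom{2m}{m}/(4^m(2m+1))$ are all positive, which is exactly why $|v(n,k)|$ rather than $v(n,k)$ appears), matching $\sum_{n,k}|v(n,k)|\,t^k x^n/n!=\sinh(t\arcsin(x))$ with no further substitution or sign-fixing needed.

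Your proposed fallback of verifying the recurrence \eqref{eqvnk} directly on the objects would be a legitimately different (and more self-contained) route, but as written it is only a gesture: the description of how to delete $2n+1$ together with a matched pair from each involution is incomplete, and the claim that this ``should reproduce the $(2n-1)^2$ coefficient'' is exactly the part that needs proof. Until either the $1/(2k+1)!$ issue is repaired in the generating-function argument or the recurrence argument is actually carried out, the proof is not complete.
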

\begin{proof}
It is known that (see \cite[p. 214]{Riordan}):
$$\sum\limits_{n,k \geq 0} |v(n,k)| t^k \frac{x^n}{n!} = \sinh(t \arcsin(x)),$$
and \[ \arcsin(x) =  \sum\limits_{n \geq 0} \left( (2n-1)!! \right)^2 \frac{x^{2n+1}}{(2n+1)!}, \] where $(2n-1)!!=(2n-1)(2n-3)\cdots 3\cdot 1$. Since $(2n-1)!!$ is the number of involutions without fixed points on $[2n]$ (see \cite{Comtet74}),  the integer $((2n-1)!!)^2$ is the number of ordered pairs of involutions without fixed points on $[2n+1] \backslash \{2n+1\}$.

Define the numbers $J(n,m)$  by:
\[ \exp \left( t \sum\limits_{n \geq 1} \left( (2n-1)!! \right)^2 \frac{x^{2n+1}}{(2n+1)!} \right)
= \sum\limits_{n,m \geq 0} J(2n+1,m) t^m \frac{x^{2n+1}}{(2n+1)!}.
 \]
Then, by the theory of exponential generating functions (see \cite[Chp. 3]{FS72} and \cite[Chp. 5]{Stanley}), the coefficient $J(2n+1,m)$ is the number of $m$-tuples $$(B_1, \sigma_1,\tau_1), \ldots, (B_m,\sigma_m,\tau_m);$$ where $\{B_1,\ldots,B_m \}$ is a partition of $[2n+1]$ with $|B_i|$ odd ($1\leq i \leq m$), and $\sigma_i$ and $\tau_i$ are involutions without fixed points on $B_i \backslash \max(B_i)$. As $\sinh(x)=(e^x - e^{-x})/2$, we have $|v(n,m)|=J(2n+1,2k+1)$ if $m=2k+1$, and $|v(n,m)|=0$ if $m$ is even.
\end{proof}

\begin{remark} From \eqref{eqtnk}, we can easily deduce that
\begin{equation}\label{eqstanley}
\sum\limits_{k=0}^{n} |v(n,k)| t^{2k+1} = t(t^2+1)(t^2+3^2)\ldots(t^2+(2n-1)^2)
\end{equation}
 It is interesting to note that a proof of the latter result is not obvious from \eqref{eqstanley}. In the same way, proofs for Theorems~\ref{thm52} and \ref{54} by using \eqref{eqVnk} or  \eqref{eqvnk}  are not obvious.
\end{remark}

\begin{example}There are  ten $(2,1)$-Riordan complexes. Since the numbers $n$ and $k$ are small, the involved involutions are identical transpositions. \[ \begin{array}{cc}
\{ 1 \},\{ (2,3),4\},\{5\}, & \{1 \}, \{2\},\{ (3,4), 5 \}, \\
\{ (1,2),3 \},\{4\},\{5\}, & \{(1,2),5 \},\{3\},\{4\}, \\
\{ (1,3),4\},\{2\},\{5\}, & \{(1,3),5 \},\{2\},\{4\}, \\
\{ (1,2),4\},\{3\},\{5\}, & \{ (1,4),5\},\{2\},\{3\}, \\
\{ 1\},\{(2,3),5\},\{4\}, & \{ 1\},\{(2,4),5\},\{3\}, \end{array}\]
where  $\{ 1 \},\{ (2,3),4\},\{5\}$ means that $\pi=\{ \{ 1 \},\{ 2,3,4\},\{5\} \}$, and $\sigma=\tau=13245$.\end{example}
%The other lines of Table p.226 in \cite{Comtet74} are also very close to line $6$, and there is maybe a link with central factorial numbers, Stirling numbers of more generally with coefficients of Jacobi-Stirling numbers.

\subsection{Generating functions}

In \cite{Everitt2}, the authors made a long calculation to derive an explicit formula for the Jacobi-Stirling numbers.
  Actually, we can derive an explicit formula for the Jacobi-Stirling numbers
  straightforwardly from the Newton interpolation formula:
 \begin{align}\label{eq:newton}
 x^n=\sum_{j=0}^n\left(\sum_{r=0}^j\frac{x_r^n}{\displaystyle \prod_{k\neq i}(x_r-x_k)}\right)\prod_{i=0}^{j-1}(x-x_i).
 \end{align}
 Indeed, making the substitutions
 $x\to m(z+m)$ and $x_i\to i(z+i)$
 in  \eqref{eq:newton}, we obtain
 \begin{align}\label{eq:base}
 \left(m(m+z)\right)^n=\sum_{j=0}^n\JS_n^j(z)(m-j+1)_j(z+m)_j,
 \end{align}
 where
  \begin{align} \label{explicit}
 \JS_n^j(z)=\sum_{r=0}^j(-1)^r\frac{[r(r+z)]^n}{r!(j-r)!(z+r)_r(z+2r+1)_{j-r}},
 \end{align}
 and $(z)_n = z(z+1)\ldots(z+n-1)$.
\begin{remark} If we substitute $x$ by $m(m+z)+k$, we obtain \cite[Theorem 4.1]{Everitt2}. \end{remark}

From the recurrence  \eqref{eqlnk1}, we derive:
 \begin{equation}
 \sum\limits_{n \geq k} \JS_n^k(z) x^n=\frac{x}{1-k(k+z)}\sum\limits_{n \geq k-1} \JS_n^{k-1}(z) x^n;
\end{equation}
therefore,
 \begin{equation}
\sum\limits_{n \geq k} \JS_n^k(z) x^n= \frac{x^k}{(1-(z+1)x)(1-2(z+2)x)\ldots(1-k(z+k)x)}.
\end{equation}

\section*{Acknowledgement} Ce travail a b\'en\'efici\'e d'une aide de l'Agence Nationale de la Recherche portant la r\'ef\'erence ANR-08-BLAN-0243-03.

\bibliographystyle{amsplain}

\end{document}